\numberwithin{equation}{section}
\newtheorem{theorem}{Theorem}[section]
\newtheorem{proposition}[theorem]{Proposition}
\newtheorem{lemma}[theorem]{Lemma}
\newtheorem{corollary}[theorem]{Corollary}
\newtheorem{question}[theorem]{Question}
\newtheorem{conjecture}[theorem]{Conjecture}
\newtheorem*{theorem*}{Theorem}
\theoremstyle{definition}
\newtheorem{example}[theorem]{Example}
\newtheorem{remark}[theorem]{Remark}
\newcommand{\CC}{ \ensuremath{\mathbb{C}}}
\newcommand{\PP}{ \ensuremath{\mathbb{P}}}
\newcommand{\KK}{ \ensuremath{\mathbb{K}}}
\definecolor{MyDarkGreen}{cmyk}{0.7,0,1,0}
\def\cocoa{{\hbox{C\kern-.13em o\kern-.07em C\kern-.13em o\kern-.15em A}}}
\begin{document}

\title[Values of resurgences]{Extreme values of the resurgence for homogeneous ideals in polynomial rings}

\author{B.\ Harbourne}
\address{Department of Mathematics\\
University of Nebraska\\
Lincoln, NE 68588-0130 USA}
\email{bharbourne1@unl.edu}

\author{J.\ Kettinger}
\address{Department of Mathematics\\
University of Nebraska\\
Lincoln, NE 68588-0130 USA}
\email{jkettinger@huskers.unl.edu}

\author{F.\ Zimmitti}
\address{Department of Mathematics\\
University of Nebraska\\
Lincoln, NE 68588-0130 USA}
\email{frank.zimmitti@huskers.unl.edu}

\begin{abstract}
We show that two ostensibly different versions of the asymptotic resurgence
introduced by Guardo, Harbourne and Van Tuyl in 2013 are the same.
We also show that the resurgence and asymptotic resurgence
attain their maximal values simultaneously, if at all,
which we apply to a conjecture of Grifo.
For radical ideals of points, we show that 
the resurgence and asymptotic resurgence
attain their minimal values simultaneously.
In addition, we introduce an integral closure version of the resurgence and relate it to the
other versions of the resurgence. In closing we provide various examples
and raise some related questions, and we finish with some remarks about
computing the resurgence.
\end{abstract}

\date{edited: May 11, 2020; compiled \today}

\thanks{
{\bf Acknowledgements}: Harbourne was 
partially supported by Simons Foundation grant \#524858.
We also wish to thank M. DiPasquale, E. Grifo, C. Huneke 
and A. Seceleanu, and
M. Dumnicki, H. Tutaj-Gasi\'nska, T. Szemberg and J. Szpond, for helpful comments.
}

\keywords{resurgence, asymptotic resurgence, symbolic power, integral closure, fat points,
polynomial ring, ideals, containment problem}

\subjclass[2010]{Primary: 
14C20, % Divisors, linear systems, invertible sheaves
13B22,  % Integral closure of rings and ideals [See also 13A35]; integrally closed rings, related rings (Japanese, etc.)
13M10; % Polynomials
Secondary: 14N05, % Projective techniques
13D40 % Hilbert-Samuel and Hilbert-Kunz functions; Poincar ́e series
}

\maketitle

%\tableofcontents

%%%%%%%%%%%%%%%%%%%%%%%%%%%%%%%%%%%%%%%%%%%%%%%%%%%%%%%%%%%

\section{Introduction}
This paper is motivated by wanting to better understand concepts, conjectures and methods introduced in
\cite{BH1}, \cite{GHVT}, \cite{G}, \cite{DFMS} and \cite{DD}
involving various approaches to the containment problem.
It makes particular use of the groundbreaking methods of
\cite{DFMS} and \cite{DD}.

Let $\KK$ be a field and let $N\geq1$. Then $R=\KK[\PP^N]$ denotes the polynomial ring $\KK[\PP^N]=\KK[x_0,\ldots,x_N]$.
Now let $(0)\neq I\subsetneq \KK[\PP^N]$ be a homogeneous ideal;
thus $I=\oplus_{t\geq0} I_t$, where $I_t$ is the $\KK$ vector space span of all homogeneous polynomials of degree $t$ in $I$.
The symbolic power $I^{(m)}$ is defined as
$$I^{(m)}=R\cap (\cap_{P\in{\rm Ass}(R/I)}I^mR_P)$$
where the intersections take place in $\KK(\PP^N)$.

While $I^r\subseteq I^{(m)}$ holds if and only if $m\leq r$, the containment problem of
determining for which $m$ and $r$
the containment $I^{(m)}\subseteq I^r$ holds is much more subtle. 
If $h_I$ is the minimum of $N$ and the bigheight of $I$ (i.e., the maximum of the heights of associated 
primes of $I$), it is known that
\begin{equation}\label{ELSHH}
I^{(rh_I)}\subseteq I^r
\end{equation}
\cite{ELS, HoHu}, so given $r$, the issue is for which $m$ with
$m<rh_I$ do we have $I^{(m)}\subseteq I^r$.
The {\it resurgence} $\rho(I)$, introduced in \cite{BH1},
gives some notion of how small the ratio $m/r$ can be and still be sure to have
$I^{(m)}\subseteq I^r$; specifically,
$$\rho(I)=\sup\Big\{\frac{m}{r} : I^{(m)}\not\subseteq I^r\Big\}.$$

A case of particular interest is that of ideals of {\em fat points}.
Given distinct points $p_1,\ldots,p_s\in\PP^N$ and nonnegative integers $m_i$
(not all 0), let $Z=m_1p_1+\cdots+m_sp_s$ 
denote the scheme (called a fat point scheme) defined by the ideal
$$I(Z)=\cap_{i=1}^s(I(p_i)^{m_i})\subseteq \KK[\PP^N],$$
where $I(p_i)$ is the ideal generated by all homogeneous polynomials
vanishing at $p_i$. Note that $I(Z)$ is always nontrivial (i.e., not $(0)$ nor $(1)$).
Symbolic powers of $I(Z)$ take the form
$I(Z)^{(m)}=I(mZ)=\cap_{i=1}^s(I(p_i)^{mm_i})$.
We say $Z$ is {\it reduced} if $m_i$ is either 0 or 1 for each $i$
(i.e., if $I(Z)$ is a radical ideal).

Subsequent to \cite{BH1}, two asymptotic notions of the resurgence were introduced by \cite{GHVT}.
The first is 
$$\rho'(I)=\limsup_t\rho(I,t)=\lim_{t\to\infty}\rho(I,t),$$
where $\rho(I,t)=\sup\Big\{\frac{m}{r} : I^{(m)}\not\subseteq I^r, m\geq t, r\geq t\Big\}$.
The second is 
$$\widehat{\rho}(I)=\sup\Big\{\frac{m}{r} : I^{(mt)}\not\subseteq I^{rt}, t\gg0\Big\}.$$
A useful new perspective on $\widehat{\rho}(I)$ is given by \cite[Corollary 4.14]{DFMS}, which shows that
$$\widehat{\rho}(I)=\sup\Big\{\frac{m}{r} : I^{(m)}\not\subseteq \overline{I^{r}}\Big\},$$
where $\overline{I^{r}}$ is the integral closure of ${I^{r}}$ (defined below).
Since we always have $I^r\subseteq \overline{I^r}$, this new perspective makes it
clear that we always have $\widehat{\rho}(I)\leq\rho(I)$, and that we have $\widehat{\rho}(I)=\rho(I)$
if $I^r= \overline{I^r}$ for all $r\geq1$.

Our major results are to show that $\widehat{\rho}(I) = \rho'(I)$ (Theorem \ref{MT1}),
that $\widehat{\rho}(I)=h_I$ if and only if $\rho(I)=h_I$ (Theorem \ref{MT2}), and that
$\widehat{\rho}(I(Z))=1$ if and only if $\rho(I(Z))=1$ when $Z\subset\PP^N$ is a reduced scheme of points (Theorem \ref{MT3})
and for every fat point subscheme $Z\subset\PP^2$ (Corollary \ref{CorIntRho1}).
We also introduce a new version of the resurgence, $\rho_{int}$, based on integral closure,
and relate it to the original resurgence. We then discuss
the relevance of our results to a conjecture of Grifo.
Finally we provide some examples and raise some questions,
and include a discussion of the computability of the resurgence.

\subsection{Background}\label{bkgrnd}
Let $I\subseteq \KK[\PP^N]$ be a nontrivial homogeneous ideal. 
Given the comments and definitions above we have (see \cite{GHVT}) 
\begin{equation}\label{ELSHHbounds}
1\leq\widehat{\rho}(I)\leq \rho'(I)\leq \rho(I)\leq h_I.
\end{equation}
By \cite{BH1} and \cite{GHVT} we also have
\begin{equation}\label{BHbounds}
\frac{\alpha(I)}{\widehat{\alpha}(I)}\leq\widehat{\rho}(I)\leq \rho'(I)\leq \rho(I),
\end{equation}
where $\alpha(I)$ is the least degree of a nonzero element of $I$ and
$\widehat{\alpha}(I)$ is the {\it Waldschmidt constant}, defined as
$$\widehat{\alpha}(I)=\lim_{m\to\infty}\frac{\alpha(I^{(m)})}{m}.$$
For a nontrivial fat point subscheme $Z\subseteq\PP^N$, by \cite{BH1} we have in addition
\begin{equation}\label{BHbounds2}
\rho(I(Z))\leq \frac{{\rm reg}(I(Z))}{\widehat{\alpha}(I(Z))},
\end{equation}
where ${\rm reg}(I(Z))$ is the Castelnuovo-Mumford regularity of $I(Z)$.

A version of resurgence can be defined with integral closure replacing symbolic powers.
We pause to briefly discuss the concept of integral closure.
Given an ideal $I\subseteq R=\KK[\PP^N]$, we recall (see \cite{HuSw}) that the {\it integral closure} $\overline{I}$ of $I$
consists of all elements $c\in R$ satisfying for some $n\geq1$ a polynomial equation
$$c^n+a_1c^{n-1}+\cdots+a_n=0$$
where $a_j\in I^j$. We say $I$ is {\it integrally closed} if $I=\overline{I}$. 
We note that $\overline{I}$ is monomial (resp. homogeneous) if $I$ is \cite[Proposition 1.4.2]{HuSw} 
(resp. \cite[Corollary 5.2.3]{HuSw}).
If $I^r$ is integrally closed for all $r\geq1$, we say $I$ is {\it normal}.

For example, the ideal $I(p_i)$ of a point $p_i\in\PP^N$ is normal. Likewise,
$M=(x_0,\ldots,x_N)$ is normal. This is because $M$ is a monomial prime ideal
and $I(p_i)$ is also, up to choice of coordinates, but monomial primes are normal.
(Apply the usual criterion for integral closure for monomial ideals, that 
the integral closure of a monomial ideal $I$ is the monomial ideal
associated to the convex hull of the Newton polygon of $I$ \cite{HuSw}.)
As further examples of integrally closed ideals, 
we note $I(Z)$ is integrally closed for all $Z$, as is $M^t\cap I(Z)$
for every $t$; this is because intersections of integrally closed ideals are integrally closed.
Now assume $\alpha(I(Z))={\rm reg}(I(Z))$; then
$(I(Z)^r)_t=(I(rZ))_t$ for $t\geq \alpha(I(Z)^r)$ (apply \cite[Lemma 2.3.3(c)]{BH1}
using the fact that $\alpha(I(Z)^r)=r\alpha(I(Z))$).
Thus we have $I(Z)^r=M^{r\alpha(I)}\cap I(rZ)$ and hence
$I(Z)$ is normal, if $\alpha(I(Z))={\rm reg}(I(Z))$. Thus, when $\alpha(I(Z))={\rm reg}(I(Z))$,
we have $\widehat{\rho}(I(Z)) =\rho(I(Z))$ by the normality, but in fact
\eqref{BHbounds} and \eqref{BHbounds2} give us more, namely 
$$\frac{\alpha(I(Z))}{\widehat{\alpha}(I(Z))}=\widehat{\rho}(I(Z)) =\rho(I(Z)).$$

Now we define the integral closure resurgence.
Given any nontrivial homogeneous ideal $I\subset\KK[\PP^N]$ but
replacing symbolic power by integral closure in the definition of resurgence
gives us the {\it integral closure resurgence},
$$\rho_{int}(I)=\sup\Big\{\frac{m}{r} : \overline{I^m}\not\subseteq I^r\Big\}.$$
If symbolic powers of $I$ are integrally closed
(as is the case when $I$ is the ideal of a fat point subscheme), we have
$\rho_{int}(I)\leq\rho(I)$, and if moreover 
$I^{(m)}=\overline{I^m}$ for all $m\geq1$, it follows that $\rho_{int}(I)=\rho(I)$
(see Theorem \ref{BSkoda}).
A lower bound such as $\rho_{int}(I)\leq\rho(I)$ is of interest since for any $c>1$
(as the proof of Theorem \ref{BSkoda} shows) it is in principle a finite
calculation (although not necessarily an easy one)
to verify whether or not $\rho_{int}(I)\geq c$, and if so
to compute $\rho_{int}(I)$ exactly.
In a further analogy of $\rho_{int}$ with $\rho$, by Lemma \ref{rhoIntLem}
we have $1\leq\rho_{int}(I)\leq N$, and, if $I=\overline{I}$, we have
$\overline{I^m}\subseteq I^r$ for all $m\geq Nr$.

\section{Main results}
We begin with the result that started this paper.

\begin{proposition}
Let $Z$ be a fat point subscheme of $\PP^N$ with an integer $c$ such that $I(cZ)^t=I(ctZ)$ for all $t\geq 1$.
Then $\widehat{\rho}(I(Z)) =\rho'(I(Z))$.
\end{proposition}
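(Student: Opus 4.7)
Since $\widehat{\rho}(I(Z)) \leq \rho'(I(Z))$ is built into \eqref{ELSHHbounds}, the task is to prove $\rho'(I(Z)) \leq \widehat{\rho}(I(Z))$. Write $I := I(Z)$ and $J := I(cZ) = I^{(c)}$; the hypothesis reads $J^t = I^{(ct)}$ for every $t \geq 1$. My first observation is that $J$ is a normal ideal: each $J^t = I(ctZ)$ is a fat point ideal, hence integrally closed as an intersection of the integrally closed ideals $I(p_i)^{ctm_i}$, and so $\overline{J^t} = J^t$ for every $t$.

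The main tool is the integral closure reformulation $\widehat{\rho}(I) = \sup\{m/r : I^{(m)} \not\subseteq \overline{I^r}\}$ from \cite[Corollary 4.14]{DFMS}. For any pair $(m,r)$ with $I^{(m)} \not\subseteq I^r$ and $m/r > \widehat{\rho}(I)$ this automatically gives $I^{(m)} \subseteq \overline{I^r}$, so the failure lies entirely in the gap $\overline{I^r} \setminus I^r$. I would then aim to establish a Brian\c{c}on--Skoda type shift bound: a constant $K = K(c,Z)$ such that $\overline{I^r} \subseteq I^{r-K}$ once $r$ is large enough. Granting this, the assumption $I^{(m)} \not\subseteq I^r$ combined with $I^{(m)} \subseteq \overline{I^r} \subseteq I^{r-K}$ traps the largest $r'$ with $I^{(m)} \not\subseteq I^{r'}$ in the impossible range $r \leq r' < r - K$. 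Hence $I^{(m)} \not\subseteq I^r$ with $r$ large forces $m/r \leq \widehat{\rho}(I)$, yielding $\rho'(I) \leq \widehat{\rho}(I)$.

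The decisive step is the shift bound, and this is where the hypothesis enters. Writing $r = cs + b$ with $0 \leq b < c$, the containment $I^{cs} \subseteq J^s$ together with the normality of $J$ gives $\overline{I^{cs}} \subseteq J^s = I^{(cs)}$, so $\overline{I^r}$ is controlled by $J^s$ up to a bounded contribution from the residue $b$. The remaining challenge is to pull $I^{(cs)}$ back into an ordinary power $I^{cs - K'}$ with a uniform shift $K'$. I expect this reduces to analyzing the Rees algebraic relationship between $I^c$ and $J$: the hypothesis makes $\bigoplus_t I^{(ct)}$ a finitely generated $R$-algebra, generated by $J$ in degree $c$, and an Artin--Rees or reduction type argument should extract the desired constant. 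The main obstacle is precisely this extraction, which requires leveraging the equality $J^t = I^{(ct)}$ more delicately than just the normality of $J$.
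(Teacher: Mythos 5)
Your overall strategy---combining the integral-closure reformulation of $\widehat{\rho}$ from \cite[Corollary 4.14]{DFMS} with a shift bound $\overline{I^r}\subseteq I^{r-K}$---is the right idea, and correctly carried out it proves the stronger Theorem~\ref{MT1} with no hypothesis on $Z$ at all. But there are two real gaps. The shift bound you leave open (``the main obstacle'') is just the ordinary Brian\c{c}on--Skoda theorem: $\overline{I^{t+N}}\subseteq I^t$ for every nontrivial homogeneous ideal $I\subset\KK[\PP^N]$ and every $t\geq1$, so one may simply take $K=N$. Neither the normality of $J=I^{(c)}$ nor any Artin--Rees or Rees-algebra extraction is needed, and the hypothesis $J^t=I^{(ct)}$ plays no role in obtaining the shift.

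More seriously, the trapping step does not close. From $I^{(m)}\not\subseteq I^r$ and $m/r>\widehat{\rho}(I)$ you deduce $I^{(m)}\subseteq\overline{I^r}\subseteq I^{r-K}$, and assert this pins ``the largest $r'$ with $I^{(m)}\not\subseteq I^{r'}$'' in the impossible range $r\leq r'<r-K$. But $\{r' : I^{(m)}\not\subseteq I^{r'}\}$ is an upward ray with no largest element; what you have actually bounded is the \emph{smallest} such $r'$, which lies in the consistent window $(r-K,\,r]$, so there is no contradiction and nothing forces $m/r\leq\widehat{\rho}(I)$. The correct deduction runs the containments the other way: if $I^{(m)}\not\subseteq I^r$, then since $\overline{I^{r+N}}\subseteq I^r$ we get $I^{(m)}\not\subseteq\overline{I^{r+N}}$, hence $m/(r+N)\leq\widehat{\rho}(I)$ by \cite[Corollary 4.14]{DFMS}; therefore for any $\epsilon>0$, pairs with $m/r\geq\widehat{\rho}(I)+\epsilon$ and $I^{(m)}\not\subseteq I^r$ have $r\leq N\widehat{\rho}(I)/\epsilon$, giving $\rho'(I)\leq\widehat{\rho}(I)+\epsilon$ and hence $\rho'(I)\leq\widehat{\rho}(I)$. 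This is essentially the paper's proof of Theorem~\ref{MT1}, following \cite{DD}. The paper's proof of the Proposition itself is more elementary and uses the hypothesis directly through \cite[Theorem~1.2(3)]{GHVT}: given any $b$ with $c/b>\widehat{\rho}(I(Z))$, it applies that result to $I(cdZ)$, whose powers are symbolic, to conclude $\rho'(I(Z))\leq c/b$, and then lets $c/b$ decrease to $\widehat{\rho}(I(Z))$.
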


\begin{proof}
Let $b$ be a rational such that $\widehat{\rho}(I(Z))<c/b$. Pick any integer $d>0$ such that $db$ is an integer.
Since $\widehat{\rho}(I(Z))<c/b=cd/(db)$, we have by definition of $\widehat{\rho}(I(Z))$ that $I(cdtZ)\subseteq I(Z)^{dbt}$ for $t\gg0$,
Note that $I(cdZ)^t=I(cZ)^{dt}=I(cdtZ)$ for all $t\geq1$. 
Hence by \cite[Theorem 1.2(3)]{GHVT} we have
$\rho'(I(Z))\leq cdt/(dbt)=c/b$. Since this holds for all $b$ with $c/b>\widehat{\rho}(I(Z))$, we have
$\widehat{\rho}(I(Z))\geq\rho'(I(Z))$. But by \cite[Theorem 1.2(1)]{GHVT} we have
$\widehat{\rho}(I(Z))\leq\rho'(I(Z))$, hence $\widehat{\rho}(I(Z))=\rho'(I(Z))$.
\end{proof}

A much stronger result can be proved based on an argument in \cite{DD}.

\begin{theorem}\label{MT1}
Let $I$ be a nontrivial homogeneous ideal of $\KK[\PP^N]$.
Then $\widehat{\rho}(I) =\rho'(I)$.
\end{theorem}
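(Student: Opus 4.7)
The inequality $\widehat{\rho}(I)\le\rho'(I)$ is already given in \eqref{ELSHHbounds}, so the task reduces to proving $\rho'(I)\le\widehat{\rho}(I)$.

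The plan combines two ingredients. The first is the reinterpretation from \cite[Corollary 4.14]{DFMS} noted in the introduction: $\widehat{\rho}(I)=\sup\{m/r:I^{(m)}\not\subseteq\overline{I^r}\}$, so whenever $m/r>\widehat{\rho}(I)$ one automatically has $I^{(m)}\subseteq\overline{I^r}$. The second is an additive Brian\c{c}on--Skoda-type bound for the regular ring $R=\KK[\PP^N]$: there exists a constant $c$ depending only on $I$ (one may take $c=N$, or more sharply $c=\ell(I)-1$ using the analytic spread of $I$) such that $\overline{I^{r+c}}\subseteq I^r$ for every $r\ge 1$. Splicing these gives the crucial bridge from integral closure back to ordinary powers: if $m/(r+c)>\widehat{\rho}(I)$, then $I^{(m)}\subseteq\overline{I^{r+c}}\subseteq I^r$.

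The rest is a short quantitative comparison. Reading the bridge contrapositively, for any $m,r\ge t$ with $I^{(m)}\not\subseteq I^r$ one must have $m/(r+c)\le\widehat{\rho}(I)$, hence
$$\frac{m}{r}\le\widehat{\rho}(I)\left(1+\frac{c}{r}\right)\le\widehat{\rho}(I)\left(1+\frac{c}{t}\right).$$
Taking the sup over such pairs yields $\rho(I,t)\le\widehat{\rho}(I)(1+c/t)$, and letting $t\to\infty$ gives $\rho'(I)\le\widehat{\rho}(I)$, which combined with \eqref{ELSHHbounds} completes the proof.

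The main obstacle is securing the correct form of Brian\c{c}on--Skoda: a purely multiplicative bound such as the $m\ge Nr$ in Lemma~\ref{rhoIntLem} would only yield $\rho'(I)\le N\widehat{\rho}(I)$, which is far too weak here. Fortunately the classical Brian\c{c}on--Skoda theorem provides exactly the additive inclusion $\overline{I^{n+d-1}}\subseteq I^n$ in any regular ring of dimension $d$, supplying the desired constant $c$. No degenerate case needs separate handling, since $\widehat{\rho}(I)\ge 1$ by \eqref{ELSHHbounds}.
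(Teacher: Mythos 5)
Your proof is correct and follows essentially the same route as the paper: the crucial ingredient in both is the bound $s \le \widehat{\rho}(I)(r+N)$ whenever $I^{(s)}\not\subseteq I^r$, which the paper obtains by citing \cite[Proposition 2.6]{DD} and which you unpack directly from Brian\c{c}on--Skoda together with \cite[Corollary 4.14]{DFMS}. Your finish, via the explicit estimate $\rho(I,t)\le\widehat{\rho}(I)(1+N/t)$, is a slightly cleaner quantitative version of the paper's ``only finitely many bad pairs'' argument, but the underlying idea is identical.
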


\begin{proof}
If $\widehat{\rho}(I) =\rho(I)$, then \eqref{ELSHHbounds} gives $\widehat{\rho}(I) =\rho'(I)$,
so assume $\widehat{\rho}(I) < \rho(I)$.
Thus there is an $\epsilon>0$
such that $\widehat{\rho}(I) + \epsilon < \rho(I)$.
As in the proof of \cite[Proposition 2.6]{DD}
(which in turn is a consequence of \cite[Lemma 4.12]{DFMS}),
we have 
$$\frac{s}{r+N} < \widehat{\rho}(I)$$
whenever $I^{(s)}\not\subseteq I^r$.

This means there are only finitely many $s$ and $r$ for which 
$\widehat{\rho}(I)+\epsilon\leq s/r$ holds but
$I^{(s)}\subseteq I^r$ fails.
(This is because $\widehat{\rho}(I) + \epsilon \leq s/r$ and $\frac{s}{r+N} < \widehat{\rho}(I)$
implies $r(\widehat{\rho}(I)+\epsilon)\leq s< \widehat{\rho}(I)(r+N)$ and $r < N\widehat{\rho}(I)/\epsilon$.)

Hence for all $s$ and $r$ sufficiently
large which have $I^{(s)}$ not contained in $I^r$, we will have 
$s/r < \widehat{\rho}(I)+\epsilon$ and hence 
$\rho'(I) \leq \widehat{\rho}(I)+\epsilon$. This is true for every $\epsilon > 0$, 
and so we get $\rho'(I) \leq \widehat{\rho}(I)$.
Since we already have $\rho'(I) \geq \widehat{\rho}(I)$, we conclude 
that $\rho'(I) = \widehat{\rho}(I)$.
\end{proof}

An alternate statement of the next result is that $\widehat{\rho}(I)=h_I$ if and only if $\rho(I)=h_I$.
We have learned that Theorem \ref{MT2} was also obtained independently by DiPasquale and Drabkin, 
but not included in \cite{DD}.

\begin{theorem}\label{MT2}
Let $I$ be a nontrivial homogeneous ideal of $\KK[\PP^N]$.
Then $\widehat{\rho}(I)<h_I$ if and only if $\rho(I)<h_I$.
\end{theorem}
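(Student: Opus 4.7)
The plan is to handle the two directions separately. One direction is immediate from the bound chain \eqref{ELSHHbounds}: since $\widehat{\rho}(I) \leq \rho(I)$, the implication $\rho(I) < h_I \Rightarrow \widehat{\rho}(I) < h_I$ is automatic, so only the reverse implication requires work.

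For the converse, I would replay the quantitative estimate that powered the proof of Theorem \ref{MT1}. Assume $\widehat{\rho}(I) = h_I - \delta$ for some $\delta > 0$. The DFMS/DD inequality (\cite[Lemma 4.12]{DFMS}, packaged as in \cite[Proposition 2.6]{DD}) asserts that whenever $I^{(s)} \not\subseteq I^r$ one has
$$\frac{s}{r+N} < \widehat{\rho}(I) = h_I - \delta,$$
which rearranges to $s/r < (h_I - \delta)(1 + N/r)$. Separately, the ELSHH containment \eqref{ELSHH} says $I^{(rh_I)} \subseteq I^r$, so any pair $(s,r)$ witnessing $I^{(s)} \not\subseteq I^r$ satisfies $s \leq rh_I - 1$, producing the integrality bound $s/r \leq h_I - 1/r$.

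Next I would combine the two bounds via a case split on the size of $r$. Setting $R_0 = \lceil 2N(h_I-\delta)/\delta \rceil$, for $r > R_0$ the DFMS bound yields $s/r < h_I - \delta/2$, while for $r \leq R_0$ the ELSHH integrality bound yields $s/r \leq h_I - 1/R_0$. In either case $s/r$ is uniformly bounded away from $h_I$, so taking the supremum of the ratios $s/r$ over all non-containments gives $\rho(I) \leq \max(h_I - \delta/2,\, h_I - 1/R_0) < h_I$, as desired.

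The main obstacle is really just the conceptual step of recognizing that the strict inequality $s/(r+N) < \widehat{\rho}(I)$ from \cite{DFMS} is already strong enough to rule out $\rho(I) = h_I$ as soon as $\widehat{\rho}(I) < h_I$; once this is identified, the remainder is a routine finite-versus-asymptotic case analysis, with ELSHH needed only to dispatch the small-$r$ tail that the asymptotic inequality cannot by itself control. Morally, the positive ``defect'' $h_I - \widehat{\rho}(I)$, when it exists, amplifies (after discarding a bounded range of $r$) into a uniform defect for the classical resurgence.
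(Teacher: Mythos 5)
Your proof is correct, and it uses the same two ingredients as the paper (the DFMS/DD inequality $\tfrac{s}{r+N}<\widehat{\rho}(I)$ for non-containments, together with the ELSHH/integrality bound). The packaging differs slightly: the paper first splits on whether $\widehat{\rho}(I)=\rho(I)$ and, in the nontrivial case, invokes \cite[Proposition 2.6]{DD} to say $\rho(I)$ is an \emph{achieved} maximum of finitely many ratios $s/r$, so $\rho(I)=h_I$ would force some $s/r=h_I$, contradicting ELSHH. You instead run the quantitative estimate directly, showing every ratio $s/r$ with $I^{(s)}\not\subseteq I^r$ is uniformly bounded below $h_I$ via a case split on $r$; this avoids both the $\widehat{\rho}=\rho$ dichotomy and the need to know that the supremum defining $\rho(I)$ is attained. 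Your route is a bit more self-contained (it does not need strictness of the DFMS/DD inequality nor the finite-max packaging), at the cost of carrying out the elementary estimate explicitly, but both are fundamentally the same argument.
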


\begin{proof}
If $\rho(I) < h_I$, then we have $\widehat{\rho}(I) \leq \rho(I) < h_I$ by \eqref{ELSHHbounds}.
So assume $\widehat{\rho}(I) < h_I$. If $\widehat{\rho}(I)=\rho(I)$, then $\rho(I) < h_I$.
If $\widehat{\rho}(I) < \rho(I)$, then by \cite[Proposition 2.6]{DD}, $\rho(I)$ is the maximum
of finitely many ratios $s/r$ with $I^{(s)}\not\subseteq I^r$.
Thus $\rho(I) = h_I$ would imply that $s/r = h_I$, which contradicts
the result of \cite{ELS, HoHu} that $I^{(m)}\subseteq I^r$
whenever $m\geq rh_I$.
\end{proof}

The other extreme is also of interest. Let $Z\subset\PP^N$ be a fat point subscheme. 
The next result, Theorem \ref{MT3}, shows that the question of when $\rho(I(Z))=1$
is related to two concepts: to analytic spreads (see \cite{HuSw}) and to
symbolic defects (see \cite{GGSV}). 

We can define the {\it analytic spread} $\ell(I)$ of a homogeneous ideal $I\subseteq R=\KK[\PP^N]$ as being the minimum
number of elements of $I$ such that, after localizing at the irrelevant ideal, the ideal $J$ they generate
has $\overline{J}=\overline{I}$ \cite[Corollary 1.2.5, Proposition 8.3.7]{HuSw}. 
The analytic spread $\ell(I)$ of an ideal $I\subseteq R$ is at least the height of $I$, since
$J$ and $I$ have the same minimal primes, and the minimal number of generators of an ideal 
is at least the height of its minimal prime of minimal height.
By \cite[Proposition 5.1.6]{HuSw}, it is also at most the dimension of $R$ (which is $N+1$).
For $I(Z)\subset \KK[\PP^N]$ we thus have $N\leq\ell(I(Z))\leq N+1$.

We say an ideal $B\subseteq\KK[\PP^N]$ is a {\it complete intersection}
if $B$ is generated by a regular sequence; equivalently, $B$ is a complete intersection
if all associated primes of $B$ have height $c$, where $c$ is the minimal number of
generators of $B$. If for a fat point subscheme $Z\subset\PP^N$ its ideal $I(Z)$ is a 
complete intersection (i.e., $I(Z)$ has $N$ generators), 
then $\ell(I(Z))=N$, but $\ell(I(Z))=N$ can occur even when $I(Z)$ is not a complete intersection
(for example, $\ell(I(mZ))=N$ when $I(Z)$ is a complete intersection but $m>1$).

The {\it symbolic defect} ${\rm sdefect}(I(Z),m)$ is the minimum number of generators of the module
$I(mZ)/I(Z)^m$ \cite{GGSV}. Thus ${\rm sdefect}(I(Z),m)=0$ if and only if $I(mZ)=I(Z)^m$.
If $I(Z)$ is a complete intersection, then ${\rm sdefect}(I(Z),m)=0$ for all $m\geq1$
since we have $I(mZ)=I(Z)^m$ (indeed, if $I$ is generated by a regular sequence, then
$I^r=I^{(r)}$ for all $r\geq 0$ by \cite[Lemma 5, Appendix 6]{ZS}). 
When $N=2$ and $Z$ is reduced (i.e., $I(Z)$ is radical), \cite[Theorem 2.6]{GGSV}
gives a converse:
if ${\rm sdefect}(I(Z),m)=0$ for all $m\geq1$, then $I(Z)$ is a complete intersection
(see also \cite[Remark 2.5]{CFGLMNSSV} and \cite[Theorem 2.8]{HU}). 

Our next result gives a number of equivalent conditions for ${\rm sdefect}(I(Z),m)=0$ for all $m\geq1$ for any 
reduced fat point subscheme $Z\subset \PP^N$, thereby extending from $N=2$ to all $N$ 
the result that $I(mZ)=I(Z)^m$ for all $m\geq1$ if and only if $I(Z)$ is a complete intersection.
We thank Seceleanu and Huneke for the implications (d) $\Rightarrow$ (e), and, for $Z$ reduced, (e) $\Rightarrow$ (a).
We do not know if (e) $\Rightarrow$ (a) holds for nonreduced fat point schemes $Z$ (i.e., when $I(Z)$ is not radical),
but see Corollaries \ref{CorIntRho1} and \ref{CorIntRho2}, and also Examples \ref{Ex1} and \ref{Ex1b} and Question \ref{Q4b}.

\begin{theorem}\label{MT3}
Let $Z$ be a nontrivial fat point subscheme of $\PP^N$. 
Then each of the following criteria implies the next.
\begin{enumerate}
\item[(a)] $I(Z)^m=I(mZ)$ (i.e., ${\rm sdefect}(I(Z),m)=0$) for all $m\geq1$.
\item[(b)] $\rho(I(Z))=1$.
\item[(c)] $\widehat{\rho}(I(Z))=1$.
\vskip.3\baselineskip
\item[(d)] $\overline{I(Z)^m}=I(mZ)$ for all $m\geq1$.
\item[(e)] The analytic spread of $I(Z)$ is $N$.
\end{enumerate}
Moreover, if $Z$ is reduced or $N=1$, then (e) implies (a).
In fact, if $Z$ is reduced or $N=1$, then each of the conditions (a)-(e) 
is equivalent to $I(Z)$ being a complete intersection.
\end{theorem}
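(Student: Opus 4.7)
The plan is to establish the chain (a)$\Rightarrow$(b)$\Rightarrow$(c)$\Rightarrow$(d)$\Rightarrow$(e), then to treat (e)$\Rightarrow$(a) and the complete intersection equivalence under the hypothesis that $Z$ is reduced or $N=1$. The first two implications are essentially formal. Under (a), for any $m\geq r$ we have $I(Z)^{(m)}=I(Z)^m\subseteq I(Z)^r$, so the supremum defining $\rho(I(Z))$ cannot exceed $1$; combined with the lower bound $\rho(I(Z))\geq 1$ in \eqref{ELSHHbounds}, this gives (b). Then (b)$\Rightarrow$(c) is the inequality $\widehat{\rho}\leq\rho$ from the same chain.

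The main algebraic content is in (c)$\Rightarrow$(d). The inclusion $\overline{I(Z)^m}\subseteq I(mZ)$ is automatic, because $I(mZ)$ is integrally closed and contains $I(Z)^m$. For the reverse, I would invoke the integral-closure reformulation of $\widehat{\rho}$ from \cite[Corollary 4.14]{DFMS}. Fix $f\in I(mZ)=I(Z)^{(m)}$ and an integer $k\geq 1$; the standard containment $(I(Z)^{(m)})^k\subseteq I(Z)^{(km)}$ gives $f^k\in I(Z)^{(km)}=I(kmZ)$, and since $km/(km-1)>1=\widehat{\rho}(I(Z))$ the reformulation forces $f^k\in\overline{I(Z)^{km-1}}$. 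Applying the valuative criterion to every Rees valuation $v$ of $I(Z)$ yields $k\,v(f)=v(f^k)\geq(km-1)v(I(Z))$, hence $v(f)\geq(m-1/k)v(I(Z))$; letting $k\to\infty$ gives $v(f)\geq m\,v(I(Z))$, and the valuative criterion again places $f$ in $\overline{I(Z)^m}$.

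For (d)$\Rightarrow$(e) I would appeal to McAdam's theorem on asymptotic prime divisors of integral closures: the irrelevant maximal ideal $\mideal\subset R$ belongs to ${\rm Ass}(R/\overline{I^n})$ for some $n\geq 1$ if and only if $\ell(I)=\dim R=N+1$. Under (d), $\overline{I(Z)^n}=I(nZ)$, whose associated primes are exactly the height-$N$ primes $I(p_i)$, so $\mideal$ never appears; hence $\ell(I(Z))\leq N$, and the general bound $\ell(I(Z))\geq {\rm ht}(I(Z))=N$ forces equality.

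The main obstacle is the converse (e)$\Rightarrow$(a) when $Z$ is reduced, which the authors attribute to Huneke and Seceleanu. A plausible route is through reduction theory: since $R/I(Z)$ is Cohen--Macaulay of dimension one and $\ell(I(Z))={\rm ht}(I(Z))=N$, $I(Z)$ admits a minimal reduction $J$ generated by $N$ elements forming a regular sequence; one would then exploit the radicality of $I(Z)$ to deduce that $J$, which has the same radical as $I(Z)$, must equal $I(Z)$, so that $I(Z)$ is a complete intersection. The classical identity $I^m=I^{(m)}$ for ideals generated by a regular sequence \cite[Lemma 5, Appendix 6]{ZS} then yields (a). The $N=1$ case is immediate: $I(Z)$ is a product of powers of linear forms in $\KK[x_0,x_1]$, hence principal, a complete intersection, and all its symbolic powers agree with its ordinary powers. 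Finally, the equivalence with $I(Z)$ being a complete intersection packages naturally, since the chain just established combined with the Huneke--Seceleanu step (or the $N=1$ remark) gives (a)$\Leftrightarrow\cdots\Leftrightarrow$(e)$\Rightarrow$CI in one direction, while CI$\Rightarrow$(a) is \cite[Lemma 5, Appendix 6]{ZS}.
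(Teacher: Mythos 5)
Your chain (a)$\Rightarrow$(b)$\Rightarrow$(c) is exactly as in the paper, and your (c)$\Rightarrow$(d) via Rees valuations is a correct re-derivation of \cite[Corollary 4.16]{DFMS}, which the paper simply cites; the (d)$\Rightarrow$(e) argument via McAdam's theorem is essentially what \cite[Proposition 5.4.7]{HuSw} says, so this too lines up with the paper (after localizing at $M$ so that the local statement applies). The $N=1$ case and the closing complete-intersection packaging are fine.

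The genuine gap is in (e)$\Rightarrow$(a) for reduced $Z$. Having a minimal reduction $J\subseteq I(Z)$ generated by $N$ elements forming a regular sequence, you assert that ``radicality'' of $I(Z)$ forces $J=I(Z)$ because $\sqrt J=\sqrt{I(Z)}=I(Z)$. That inference is false in general: a complete intersection reduction of a radical ideal need not be integrally closed (e.g.\ $(x^2,y^2)\subset(x,y)$ in the primary setting), and sharing a radical does not pin $J$ down to $I(Z)$. What is actually used, and what makes reducedness essential, is that at each minimal prime $P=I(p_i)$ the localization $I(Z)_P=P_P$ is the maximal ideal of the regular local ring $R_P$, so $J_P$ is a reduction of $P_P$ of multiplicity $1$ and hence equals $P_P$; combined with the unmixedness of both $J$ (height-$N$ complete intersection in a Cohen--Macaulay ring) and $I(Z)$, this yields $J=I(Z)$. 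This locally-complete-intersection input is precisely what the paper's citation of Cowsik--Nori \cite{CN} delivers in one step: $\ell(I(Z))=\operatorname{ht}(I(Z))$ together with $I(Z)$ being locally a complete intersection gives that $I(Z)_M$ is a complete intersection, and the paper then lifts this to $I(Z)$ itself by counting generators. You should either cite Cowsik--Nori or supply the localization-and-multiplicity argument; the phrase ``exploit the radicality'' does not close the gap. (You also tacitly use existence of a minimal reduction with $\ell(I)$ generators, which requires an infinite residue field; the paper sidesteps this by working at $M$.)
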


\begin{proof}
That (a) implies (b) is clear (since $I(Z)^m\subseteq I(Z)^r$ if and only if $m\geq r$), 
and (b) implies (c) since $1\leq\widehat{\rho}(I(Z))\leq\rho(I(Z))$.
Next (c) is equivalent to (d) by \cite[Corollary 4.16]{DFMS}.
Now we show (d) implies (e). By
\cite[Proposition 5.4.7]{HuSw}, we have that (d) implies $\ell(I(Z))\neq N+1$, hence
$\ell(I(Z)) = N$. 

If $N=1$, then $I(Z)$ is principal, hence (a) always holds and $I(Z)$ is a complete intersection.
Finally assume that $Z$ is reduced. Thus the primary components of $I(Z)$
are ideals of points of multiplicity 1, hence are complete intersections
(i.e., $I(Z)$ is locally a complete intersection). By \cite{CN}, $\ell(I(Z)) = N$
implies that the localization $I(Z)_M$ of $I(Z)$ at the irrelevant ideal $M\subset R=\KK[\PP^N]$ 
is a complete intersection, and hence $(I(Z)^m)_M=(I(Z)_M)^m$ is saturated for all $m\geq1$, so 
$I(Z)^m$ itself is saturated so (a) holds.
Moreover, since (e) implies that $I(Z)_M$ is a complete intersection when $Z$ is reduced, 
the number of generators of
$I(Z)_M$ is $N=\dim I(Z)_M/I(Z)_MM_M=\dim I(Z)/I(Z)M$, hence $I(Z)$ 
also has $N$ generators, so is itself is a complete intersection and hence (a) holds.
\end{proof}

We can now give a characterization of those $Z$ for which $I(mZ)=I(Z)^m$ for all $m\geq1$.
This characterization is not so interesting in itself, but it does raise the question
of whether the normality hypothesis can be dropped; see Question \ref{Q4b}.
Although $I(Z)$ is not always normal (see Example \ref{Ex2}),
we do not know any examples with $\widehat{\rho}(I(Z))=1$ where $I(Z)$ is not normal. 

\begin{corollary}\label{MT3Cor}
Let $Z\subset\PP^N$ be a fat point subscheme. 
Then $I(mZ)=I(Z)^m$ for all $m\geq1$ if and only if $\widehat{\rho}(I(Z))=1$ and $I(Z)$ is normal.
\end{corollary}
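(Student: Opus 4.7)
The plan is to derive both directions directly from the equivalence (c) $\Leftrightarrow$ (d) in Theorem \ref{MT3} together with the basic fact that symbolic powers of fat point ideals are integrally closed (noted in the background section).

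For the forward direction, I would assume $I(mZ)=I(Z)^m$ for all $m\geq 1$, which is exactly condition (a) of Theorem \ref{MT3}. Since (a) implies (c), we immediately obtain $\widehat{\rho}(I(Z))=1$. To get normality, I would invoke the fact that $I(mZ)=I(Z)^{(m)}$ is integrally closed for every $m$ (intersections of integrally closed ideals are integrally closed, and each $I(p_i)^{mm_i}$ is integrally closed as a power of an ideal that is, up to coordinates, a monomial prime). Since by hypothesis $I(Z)^m=I(mZ)$, each power $I(Z)^m$ is integrally closed, so $I(Z)$ is normal by definition.

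For the reverse direction, I would assume $\widehat{\rho}(I(Z))=1$ and $I(Z)$ is normal. By the equivalence (c) $\Leftrightarrow$ (d) in Theorem \ref{MT3}, the first hypothesis yields $\overline{I(Z)^m}=I(mZ)$ for all $m\geq 1$. Normality of $I(Z)$ gives $\overline{I(Z)^m}=I(Z)^m$ for all $m\geq 1$. Chaining these two equalities produces $I(Z)^m=I(mZ)$ for every $m\geq 1$, which is the desired conclusion.

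There is no significant obstacle here, since all the work has been done in Theorem \ref{MT3} and in the background discussion of integral closures of symbolic powers of fat point ideals. The only thing to be careful about is to cite the integral closedness of $I(mZ)$ for every $m$ (not just $m=1$) in the forward direction, so that one actually gets normality rather than merely $I(Z)=\overline{I(Z)}$.
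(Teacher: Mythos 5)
Your proof is correct and follows essentially the same route as the paper: both directions rely on the implications of Theorem \ref{MT3} (in particular (a) $\Rightarrow$ (c) and the equivalence (c) $\Leftrightarrow$ (d)) together with the fact from the background section that $I(mZ)$ is integrally closed for every $m$. The paper phrases the forward normality step as the inclusion chain $I(Z)^m\subseteq\overline{I(Z)^m}\subseteq I(mZ)=I(Z)^m$, but this is the same observation you make.
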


\begin{proof}
Having $I(mZ)=I(Z)^m$ for all $m\geq1$ implies $\widehat{\rho}(I(Z))=1$ by Theorem \ref{MT3},
and it implies $I(Z)$ is normal since $I(Z)^m\subseteq \overline{I(Z)^m}\subseteq I(mZ)$. Conversely,
$\widehat{\rho}(I(Z))=1$ implies $I(mZ)=\overline{I(Z)^m}$ for all $m\geq1$ 
by Theorem \ref{MT3} and normality implies $\overline{I(Z)^m}=I(Z)^m$ for all $m\geq1$,
hence $I(mZ)=I(Z)^m$ holds for all $m\geq1$.
\end{proof}

We now consider $\rho_{int}(I)$. Note for a nontrivial homogeneous ideal $I\subset\KK[\PP^N]$
that an ideal independent version of \eqref{ELSHH}
can be stated as $I^{(Nr)}\subseteq I^r$, and the corresponding bounds \eqref{ELSHHbounds} 
on $\rho(I)$ are $1\leq \rho(I)\leq N$. In analogy with this, we have the following lemma.
(See Example \ref{Ex4} for an example showing that the assumption $I=\overline{I}$ is needed both
for the second part of Lemma \ref{rhoIntLem} and for Theorem \ref{BSkoda}(b).)

\begin{lemma}\label{rhoIntLem}
Let $I\subset\KK[\PP^N]$ be a nontrivial homogeneous ideal.
Then $1\leq \rho_{int}(I)\leq N$. Moreover,
if $N>1$ and $t>1$, or if $I=\overline{I}$  and $t\geq1$, then $\overline{I^{Nt}}\subseteq I^t$.
\end{lemma}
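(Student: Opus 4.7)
The plan is to use the Briançon--Skoda theorem as the primary engine: since $R = \KK[\PP^N] = \KK[x_0, \ldots, x_N]$ is a regular Noetherian ring of Krull dimension $N+1$, the Lipman--Teissier strengthening of Briançon--Skoda gives $\overline{I^{k+N}} \subseteq I^k$ for every ideal $I \subseteq R$ and every $k \geq 1$ (see \cite{HuSw}). The lower bound $\rho_{int}(I) \geq 1$ does not require this: since $I$ is proper and homogeneous, $\alpha(I) \geq 1$, so for each $r \geq 2$ the strict inequality $\alpha(I^{r-1}) = (r-1)\alpha(I) < r\alpha(I) = \alpha(I^r)$ forces $I^{r-1} \not\subseteq I^r$ and hence $\overline{I^{r-1}} \not\subseteq I^r$; the supremum of $(r-1)/r$ is $1$, so $\rho_{int}(I) \geq 1$.

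I would then handle $\rho_{int}(I) \leq N$ and the first hypothesis of the moreover statement uniformly. If $m/r > N$, then $m \geq Nr + 1 \geq r + N$ (the last inequality reduces to $(N-1)(r-1) \geq 0$), so Briançon--Skoda yields $\overline{I^m} \subseteq \overline{I^{r+N}} \subseteq I^r$, ruling out any ratio strictly above $N$ from the set defining $\rho_{int}(I)$. For the moreover statement with $N > 1$ and $t > 1$, the inequality $(N-1)(t-1) \geq 1$ gives $Nt \geq t + N$, so the same application yields $\overline{I^{Nt}} \subseteq \overline{I^{t+N}} \subseteq I^t$.

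The remaining task is the second hypothesis of the moreover statement, $I = \overline{I}$ with $t \geq 1$. The case $t = 1$ is immediate from $\overline{I^N} \subseteq \overline{I} = I$, and the case $t \geq 2$ with $N \geq 2$ is covered by the previous paragraph. The main obstacle is the corner case $N = 1$, $t \geq 2$: here Briançon--Skoda delivers only $\overline{I^t} \subseteq I^{t-1}$, one short of what is needed. My plan for this case is to invoke Zariski's theorem that products of integrally closed ideals in a two-dimensional regular local ring remain integrally closed (see \cite{HuSw}). Concretely, I would factor $I = f \cdot J$ in $R = \KK[x_0, x_1]$ with $f$ a gcd of generators of $I$ and $J$ either trivial or $M$-primary (where $M$ is the irrelevant ideal); since $R$ is normal, the principal ideal $(f)$ is integrally closed, so the hypothesis $I = \overline{I}$ reduces to $J = \overline{J}$. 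Localizing at $M$, the ideal $JR_M$ is integrally closed in the two-dimensional regular local ring $R_M$, so Zariski's theorem gives $J^t R_M = (JR_M)^t$ integrally closed, i.e., $\overline{J^t}R_M = J^t R_M$. Because both $J^t$ and $\overline{J^t}$ are $M$-primary, this contracts back to $J^t = \overline{J^t}$ in $R$, whence $\overline{I^t} = f^t \overline{J^t} = f^t J^t = I^t$.
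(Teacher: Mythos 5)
Your proof is correct and follows essentially the same route as the paper's: Brian\c{c}on--Skoda handles the bound $\rho_{int}(I)\leq N$ and the cases $N>1$; for $N=1$ you factor out the gcd, localize at the irrelevant ideal, and invoke Zariski's theorem on products of integrally closed ideals in a two-dimensional regular local ring. One step is asserted more quickly than it deserves: you write that since $(f)$ is integrally closed, ``the hypothesis $I=\overline{I}$ reduces to $J=\overline{J}$,'' and at the end you use $\overline{I^t}=f^t\overline{J^t}$. What both uses actually require is the lemma that $\overline{aJ}=a\overline{J}$ for any nonzero $a$ in a normal domain, which is strictly stronger than $(a)=\overline{(a)}$. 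The paper spends a substantial portion of its proof establishing exactly this equivalence (giving two separate arguments for it), so it is the crux of the $N=1$ case rather than a throwaway remark. Your unified derivation of $\rho_{int}(I)\leq N$ from $m\geq Nr+1\geq r+N$ is slightly cleaner than the paper's case split on $t=1$, $N=1$, and $t,N\geq 2$, but these are minor stylistic gains on top of the same underlying argument.
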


\begin{proof}
Since $I^t\subseteq \overline{I^{t}}$ but $I^t\not\subseteq I^{t+1}$,
we see that $\overline{I^t}\not\subseteq I^{t+1}$, so $1\leq\rho_{int}(I)$.

The Brian{\c c}on-Skoda Theorem \cite{HuSw} asserts
$\overline{I^{t+N}}\subseteq I^t$ for each $t\geq1$.
For $t,N\geq2$ we have $tN\geq t+N$, hence
$I^{Nt}\subseteq I^{t+N}$ and thus
$\overline{I^{Nt}}\subseteq\overline{I^{t+N}}\subseteq I^t$.
This also tells us that if $\overline{I^m}\not\subseteq I^t$, we must have either
$m<Nt$ (and so $m/t<N$) or $t$ or $N$ must be equal to 1.
By Brian{\c c}on-Skoda, $\overline{I^m}\not\subseteq I^t$ implies $m<t+N$, so if $t=1$, then $m/t = m\leq N$,
while if $N=1$, then we have $m<t+1$, so $m/t\leq t/t=1=N$.
Thus in all cases we have $m/t\leq N$, hence $\rho_{int}(I)\leq N$.

We already saw that $\overline{I^{Nt}}\subseteq I^t$ if $N,t>1$. Assume $N>1$ but $t=1$.
Then $I^N\subseteq I$ so if $I=\overline{I}$ we have $\overline{I^{N}}\subseteq I$.
Finally assume $N=1$ and $I\subset \KK[x_0,x_1]$ is a nontrivial homogeneous ideal.
Thus $I=(G_1,\ldots,G_s)$ for some nonzero homogeneous generators $G_i$.
Let $F$ be the greatest common divisor of the $G_i$, and for each $i$ let $H_iF=G_i$.
Then $I=(F)Q$ where $Q=(H_1,\ldots,H_s)$. If $\deg H_i=0$ for some $i$, we have $Q=(1)$ and so $I=(F)$.
If $\deg H_i>0$ for all $i$, then $Q$ is primary for $(x_0,x_1)$.

If $I=(F)$, then $I$ is normal, so we have $\overline{I^t}=I^t$ for all $t\geq1$. 
So say $I=(F)Q$ where $Q$ is primary for $(x_0,x_1)$, hence $I^t=(F^t)Q^t$ for each $t\geq1$.

Note that $I^t=\overline{I^t}$ if and only if $Q^t=\overline{Q^t}$.
(Here's why. Assume $I^t=\overline{I^t}$.
By \cite[Remark 1.3.2(2)]{HuSw}, we have $\overline{I^t:J}\subseteq \overline{I^t}:\overline{J}$
for any ideal $J$. Take $J=(F^r)$. Then $Q^t\subseteq \overline{Q^t}=
\overline{I^t:J}\subseteq \overline{I^t}:\overline{J}=I^t:(F^r)=Q^t$.
Alternatively, say $x\in \overline{Q^t}$. 
Then $x^n+a_1x^{n-1}+\cdots+a_n=0$ for some $n$ and some $a_i\in Q^{ti}$.
Multiplying by $F^{tn}$ gives $(F^tx)^n+F^ta_1(F^tx)^{n-1}+\cdots+(F^t)^na_n=0$ so
$F^tx\in \overline{(F^t)Q^t}=\overline{I^t}=I^t=(F^t)Q^t$, so $x\in Q^t$.
Now assume $Q^t = \overline{Q^t}$. Say $x\in \overline{I^t}$.
Then $x^n+F^ta_1x^{n-1}+\cdots+(F^t)^na_n=0$ for some $n$ with $a_i\in Q^{ti}$.
Say $F^t=AP^m$ where $P$ is an irreducible factor of $F$ and $P$ does not divide $A$. Then $P$ divides $x$.
Let $yP=x$. Dividing out gives
$y^n+AP^{m-1}a_1y^{n-1}+\cdots+A^nP^{n(m-1)}a_n=0$.
We can keep dividing out until we have
$z^n+Aa_1z^{n-1}+\cdots+A^na_n=0$, where $zP^m=x$ and $z\in\overline{(A)Q^t}$.
Continuing in this way, dividing out irreducible factors of $F^t$,
we eventually see that we get an element $w$ with $wF^t=x$,
where $w\in\overline{Q^t}=Q^t$, hence $x\in (F^t)Q^t=I^t$.) 

Now we claim that $Q$ is normal if and only if $Q=\overline{Q}$.
Given this, if $I=\overline{I}$, then $Q=\overline{Q}$, hence $Q^t=\overline{Q^t}$
so $I^t=\overline{I^t}$ (i.e., $I$ is normal), which is what we needed to show.
But note that $Q$ normal implies by definition that $Q=\overline{Q}$.
Conversely, assume $Q=\overline{Q}$. It suffices to show
$Q^t=\overline{Q^t}$ for each $t$. 
By \cite[Proposition 4.8]{AM} and \cite[Proposition 1.1.4]{HuSw}, 
it is enough to check this after localizing at $(x_0,x_1)$.
But by results of Zariski, $Q=\overline{Q}$ implies 
$Q^t=\overline{Q^t}$ for each $t$ in the local case
\cite[Theorem 14.4.4]{HuSw}.
\end{proof}

\begin{theorem}\label{BSkoda}
Let $I\subset \KK[\PP^N]$ be a nontrivial homogeneous ideal, $N\geq1$.
\begin{enumerate}
\item[(a)] We have 
$$1\leq\rho_{int}(I)=\max\Big\{\big\{\frac{m}{r}: \overline{I^m}\not\subseteq I^r\big\}\cup\{1\}\Big\}.$$
\item[(b)] If $I=\overline{I}$ and $N>1$, then $\rho_{int}(I)<N$.
\item[(c)] If $I^{(m)}=\overline{I^{(m)}}$ for all $m\geq1$ (as for example is the case for 
$I=I(Z)$ for a fat point subscheme  $Z\subset\PP^N$), then
$\rho_{int}(I)\leq\rho(I)$. 
\item[(d)] If $I^{(m)}=\overline{I^m}$ for all $m\geq1$, then
$\rho_{int}(I)=\rho(I)$. 
\end{enumerate}
\end{theorem}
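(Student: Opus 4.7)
The plan is to address the four parts in order, using the Brian\c{c}on--Skoda bound together with Lemma \ref{rhoIntLem} as the principal tools.

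For part (a), the lower bound $\rho_{int}(I)\geq 1$ is already implicit in the proof of Lemma \ref{rhoIntLem}, via $\overline{I^t}\not\subseteq I^{t+1}$ for every $t\geq 1$. The real task is to promote the supremum to a maximum. The Brian\c{c}on--Skoda containment $\overline{I^{r+N}}\subseteq I^r$ forces $m<r+N$ whenever $\overline{I^m}\not\subseteq I^r$; equivalently, $m/r<1+N/r$. Hence for any fixed $c>1$, the requirements $\overline{I^m}\not\subseteq I^r$ and $m/r\geq c$ together yield $r<N/(c-1)$ and $m<r+N$, bounding both $r$ and $m$. Only finitely many pairs $(m,r)$ with $m/r\geq c$ can then satisfy $\overline{I^m}\not\subseteq I^r$. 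If $\rho_{int}(I)>1$, choosing any $c\in(1,\rho_{int}(I))$ identifies $\rho_{int}(I)$ as the maximum over a (nonempty) finite subset of $\{m/r:\overline{I^m}\not\subseteq I^r\}$; if $\rho_{int}(I)=1$, the adjoined element $1$ supplies the maximum. Either way the displayed formula for (a) holds.

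For part (b), with $I=\overline{I}$ and $N>1$, Lemma \ref{rhoIntLem} gives $\overline{I^{Nt}}\subseteq I^t$ for every $t\geq 1$, so $\overline{I^m}\not\subseteq I^r$ forces $m<Nr$ and hence $m/r<N$. By part (a), $\rho_{int}(I)$ is either equal to $1$ or attained at some such pair $(m,r)$; in either case $\rho_{int}(I)<N$, since $N>1$.

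For parts (c) and (d), the inclusion $I^m\subseteq I^{(m)}$ always gives $\overline{I^m}\subseteq\overline{I^{(m)}}$. Under the hypothesis of (c), $\overline{I^{(m)}}=I^{(m)}$, so $\overline{I^m}\subseteq I^{(m)}$; hence $\overline{I^m}\not\subseteq I^r$ implies $I^{(m)}\not\subseteq I^r$, yielding $\{m/r:\overline{I^m}\not\subseteq I^r\}\subseteq\{m/r:I^{(m)}\not\subseteq I^r\}$ and thus $\rho_{int}(I)\leq\rho(I)$. Under the stronger hypothesis of (d), $I^{(m)}=\overline{I^m}$ equates the two defining sets outright, giving $\rho_{int}(I)=\rho(I)$. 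The main obstacle is the sup-to-max promotion in (a), which is handled by the direct counting argument above; parts (b)--(d) then follow as short consequences.
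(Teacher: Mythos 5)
Your proof is correct and takes essentially the same approach as the paper: Lemma \ref{rhoIntLem} for the lower bound and Brian\c{c}on--Skoda for the finiteness argument in (a), then parts (b)--(d) as immediate consequences. The only cosmetic difference is in (a), where you choose an arbitrary $c\in(1,\rho_{int}(I))$ while the paper picks $c=m_0/r_0$ for a specific witness pair $(r_0,m_0)$ with $\overline{I^{m_0}}\not\subseteq I^{r_0}$ and $m_0/r_0>1$; both variants correctly promote the supremum to a maximum.
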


\begin{proof}
(a) By Lemma \ref{rhoIntLem} we have $1\leq \rho_{int}(I)$.
In order for $\rho_{int}(I) > 1$ there must be a pair of positive integers $(r_0,m_0)$
with both $m_0/r_0 > 1$ and $\overline{I^{m_0}}\not\subseteq I^{r_0}$ (and hence
$m_0<r_0+N$ by Brian{\c c}on-Skoda). Set $c=m_0/r_0$.
If $\rho_{int}(I)>c$, then as before we have $(r,m)$
with both $m/r > c$ and $\overline{I^m}\not\subseteq I^r$ (and hence $m<r+N$).
But there are only finitely many pairs $(r,m)$ with $cr<m$ and $m<r+N$
(in particular, we have $r< N/(c-1)$ and $cr<m<r+N$).
Thus either $\rho_{int}(I)=1$ or $\rho_{int}(I)=\max\{\frac{m}{r}: \overline{I^m}\not\subseteq I^r\}$,
hence $\rho_{int}(I)=\max\Big\{\big\{\frac{m}{r}: \overline{I^m}\not\subseteq I^r\big\}\cup\{1\}\Big\}.$

(b) Now assume $I=\overline{I}$ and $N>1$. By Lemma \ref{rhoIntLem} we have $\overline{I^{Nr}}\subseteq I^r$ for $r\geq1$,
so $\overline{I^m}\not\subseteq I^r$ implies $m/r<N$, hence $\rho_{int}(I)$, being either 1 or
a maximum of values $m/r$ less than $N$, is less than $N$.

(c) Here we assume $I^{(m)}=\overline{I^{(m)}}$ for all $m\geq1$. 
Then since $I^m\subseteq I^{(m)}$ we have $\overline{I^m}\subseteq I^{(m)}$,
so $\overline{I^m}\not\subseteq I^r$ implies $I^{(m)}\not\subseteq I^r$,
and hence $\rho_{int}(I)\leq\rho(I)$.

(d) Assuming $I^{m}=\overline{I^{(m)}}$ for all $m\geq1$, we have
$$\rho_{int}(I) =\sup\Big\{\frac{m}{r} : \overline{I^m}\not\subseteq I^r\Big\}=
\sup\Big\{\frac{m}{r} : I^{(m)}\not\subseteq I^r\Big\}=\rho(I).$$
\end{proof}

We do not know any examples with $\rho_{int}(I(Z))>1$. For $Z\subset\PP^2$, there are none, by the next result,
which is an immediate consequence of \cite[Theorem 3.3]{AH1}.
We thank Huneke for alerting us to this result.

\begin{corollary}\label{CorIntRho3}
Let $Z\subset\PP^N$ be a nontrivial fat point subscheme and let $I=I(Z)$.
Then we have the following:
\begin{enumerate}
\item[(a)] $\overline{I^{N+m-1}}\subseteq I^m$ for $m\geq1$;
\item[(b)] $\rho_{int}(I(Z))\leq N/2$ for $N\geq2$; and
\item[(c)] $\rho_{int}(I(Z))=1$ for $N=1,2$.
\end{enumerate}
\end{corollary}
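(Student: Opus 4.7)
The plan is to invoke \cite[Theorem 3.3]{AH1} as a black box to obtain (a), and then derive (b) and (c) by elementary numerical reasoning. For (a), I would observe that $I(Z)$ is the ideal of a fat point subscheme in $\KK[\PP^N]$, so it fits the hypotheses of \cite[Theorem 3.3]{AH1}, which directly produces the containment $\overline{I^{N+m-1}}\subseteq I^m$ for every $m\geq 1$.

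For (b), assume $N\geq 2$. I would first rule out $r=1$: as noted in the Background discussion, $I=I(Z)$ is integrally closed, so $\overline{I^m}\subseteq \overline{I}=I=I^1$ for all $m\geq 1$, hence no pair $(m,1)$ can witness $\overline{I^m}\not\subseteq I^r$. For $r\geq 2$, the contrapositive of (a) (using $I^m\subseteq I^{N+r-1}$ whenever $m\geq N+r-1$) forces any witnessing pair to satisfy $m\leq N+r-2$, so
$$\frac{m}{r}\leq \frac{N+r-2}{r}=1+\frac{N-2}{r}\leq 1+\frac{N-2}{2}=\frac{N}{2}.$$
Taking the sup, together with the lower bound $\rho_{int}(I(Z))\geq 1$ from Lemma \ref{rhoIntLem}, gives $\rho_{int}(I(Z))\leq N/2$.

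For (c), the case $N=2$ follows at once from (b) combined with $\rho_{int}(I(Z))\geq 1$. For $N=1$, part (a) specializes to $\overline{I^m}\subseteq I^m$, and together with the always-valid inclusion $I^m\subseteq\overline{I^m}$ this yields $\overline{I^m}=I^m$ for every $m\geq 1$; thus $\overline{I^m}\not\subseteq I^r$ reduces to $I^m\not\subseteq I^r$, which forces $m<r$, so every witnessed ratio is strictly less than $1$, and combining again with Lemma \ref{rhoIntLem} yields $\rho_{int}(I(Z))=1$.

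The substantive content sits entirely inside \cite[Theorem 3.3]{AH1}; the only modest subtlety is remembering to exclude the case $r=1$ in (b) using integral closedness of $I(Z)$, without which the bound $m\leq N+r-2$ would not collapse to $N/2$.
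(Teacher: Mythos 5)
Your overall route matches the paper's, and parts (b) and (c) are fine; in fact, your exclusion of $r=1$ in (b) via $\overline{I^m}\subseteq\overline{I}=I$ is exactly the intended logic (the paper's own text reads as though it has a sign typo at that step), and in (c) your derivation of normality for $N=1$ from the specialization $\overline{I^m}\subseteq I^m$ of (a) is a tidy alternative to the paper's appeal to Lemma \ref{rhoIntLem}. The one real gap is in (a). You state that \cite[Theorem 3.3]{AH1} ``directly produces'' $\overline{I^{N+m-1}}\subseteq I^m$, but as cited that theorem gives $\overline{I^{\ell+m}}\subseteq I^{\ell-N+m+1}$ for $m\geq0$, after localizing at the irrelevant ideal $M$, where $\ell=\ell(I)$ is the analytic spread of $I$. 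To land the stated containment one must still supply the bound $N\leq\ell(I(Z))\leq N+1$ from the Background discussion, run the two cases $\ell=N$ and $\ell=N+1$, and observe that homogeneity of all the ideals involved allows delocalization from $R_M$ back to $R$. The paper's proof carries out precisely these bookkeeping steps; your proposal treats them as automatic, so you should fill them in.
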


\begin{proof}
(a) Let $M=(x_0,\ldots,x_N)\subset \KK[\PP^N]$.
Using $I$ as a reduction for $I$ and $\ell$ for the analytic spread of $I$,
\cite[Theorem 3.3]{AH1} states (after localizing at the irrelevant ideal $M$)
that $\overline{I^{\ell+m}}\subseteq I^{\ell-N+m+1}$ for $m\geq0$.
But $N\leq \ell\leq N+1$, so for $\ell=N$ we have
$\overline{I^{N+m}}\subseteq I^{m+1}$, while for $\ell=N+1$ we have
$\overline{I^{N+m+1}}\subseteq I^{m+2}$, both for $m\geq0$. Either way, we have
$\overline{I^{N+m-1}}\subseteq I^m$ for $m\geq1$, and hence 
$\overline{I^{N+m-1}}\subseteq I^m$ holds without localizing, since all of the ideals
are homogeneous.

(b) Assume $N\geq 2$. For $r=1$ we have $\overline{I^m}\not\subseteq I^r$ for all $m\geq 1$,
so consider $r>1$. Then we have $\overline{I^m}\subseteq I^r$ for all $m\geq N+r-1$,
so the fractions $m/r$ for which we have $\overline{I^m}\not\subseteq I^r$ 
are contained in the set $\{m/r : 1\leq m\leq N+r-2, r\geq2\}$. The supremum occurs for
$m=N+r-2$ and $r=2$, hence the supremum is $N/2$, so $\rho_{int}(I(Z))\leq N/2$.

(c) When $N=1$ we have $\rho_{int}(I(Z))=1$ from Lemma \ref{rhoIntLem}, and
when $N=2$ we have $\rho_{int}(I(Z))=1$ from (b).
\end{proof}

If $\rho_{int}(I(Z))$ is always 1, then the next result would imply that 
$\widehat{\rho}(I(Z))=1$ if and only if $\rho(I(Z))=1$.
In particular, it shows that $\widehat{\rho}(I(Z))=1$ if and only if $\rho(I(Z))=1$ for every fat point subscheme $Z\subset\PP^2$.

\begin{corollary}\label{CorIntRho1}
Let $Z\subset\PP^N$ be a nontrivial fat point subscheme.
If $\widehat{\rho}(I(Z))=1$, then $\rho_{int}(I(Z))=\rho(I(Z))$,
hence $\widehat{\rho}(I(Z))=1$ if and only if $\rho(I(Z))=1$ when $N=2$.
\end{corollary}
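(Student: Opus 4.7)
The plan is to assemble this as a short chain of the earlier results. The hypothesis $\widehat{\rho}(I(Z))=1$ feeds directly into Theorem \ref{MT3}: the implication (c) $\Rightarrow$ (d) there yields $\overline{I(Z)^m}=I(mZ)$ for every $m\geq 1$. Since $I(mZ)=I(Z)^{(m)}$ by definition of symbolic powers for fat point ideals, this reads $I(Z)^{(m)}=\overline{I(Z)^m}$ for all $m\geq 1$, which is exactly the hypothesis of Theorem \ref{BSkoda}(d). Applying that result gives $\rho_{int}(I(Z))=\rho(I(Z))$, establishing the first assertion of the corollary.

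For the $N=2$ statement, I would combine the first part with Corollary \ref{CorIntRho3}(c), which guarantees $\rho_{int}(I(Z))=1$ whenever $Z\subset\PP^2$. If $\widehat{\rho}(I(Z))=1$, then by the first part $\rho(I(Z))=\rho_{int}(I(Z))=1$. Conversely, if $\rho(I(Z))=1$, then $\widehat{\rho}(I(Z))=1$ follows at once from the standard inequalities $1\leq\widehat{\rho}(I(Z))\leq\rho(I(Z))$ in \eqref{ELSHHbounds}.

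There is essentially no obstacle here: the corollary is a bookkeeping consequence of three prior results. The only thing to be careful about is correctly identifying which hypothesis of Theorem \ref{BSkoda}(d) is being verified, namely $I^{(m)}=\overline{I^m}$ (not merely $I^{(m)}=\overline{I^{(m)}}$, which holds automatically for fat point ideals); the strength of $\widehat{\rho}(I(Z))=1$, recast through Theorem \ref{MT3}(d), is precisely what upgrades the routine identity to the one needed.
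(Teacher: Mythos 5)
Your proof is correct and follows essentially the same route as the paper's own argument: Theorem \ref{MT3}(c)$\Rightarrow$(d), then Theorem \ref{BSkoda}(d), then Corollary \ref{CorIntRho3}(c) together with the standard inequalities for the $N=2$ equivalence. You have simply made explicit the appeal to Theorem \ref{BSkoda}(d) that the paper leaves implicit in the word ``so.''
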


\begin{proof}
% If $\rho(I(Z))=1$, then we have $1\leq\widehat{\rho}(I(Z)) \leq \rho(I(Z))=1$ by \eqref{ELSHHbounds}
% and $1\leq\rho_{int}(I(Z))\leq\rho(I(Z))=1$ by Proposition \ref{PropIntRes},
% so in fact $1=\rho_{int}(I(Z))=\widehat{\rho}(I(Z))=\rho(I(Z))$.
%So assume $\widehat{\rho}(I(Z))=1$. 
By Theorem \ref{MT3}, $\widehat{\rho}(I(Z))=1$ implies $I(mZ) = \overline{I(Z)^m}$,
so $\rho_{int}(I(Z))=\rho(I(Z))$. Since $\rho_{int}(I(Z))=1$ when $N=2$ by Corollary \ref{CorIntRho3},
and since $\rho(I(Z))=1$ implies $\widehat{\rho}(I(Z))=1$, we have
$\widehat{\rho}(I(Z))=1$ if and only if $\rho(I(Z))=1$ when $N=2$.
\end{proof}

We now recover a version of \cite[Corollary 4.17]{DFMS}.

\begin{corollary}\label{CorIntRho2}
Let $Z\subset\PP^N$ be a nontrivial fat point subscheme.
Then $\rho(I(Z))=1$ if and only if $\widehat{\rho}(I(Z))=\rho_{int}(I(Z))=1$.
\end{corollary}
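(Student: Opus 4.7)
The plan is to prove Corollary \ref{CorIntRho2} by assembling the earlier results, since both directions reduce essentially to combining Theorem \ref{MT3} with Theorem \ref{BSkoda}. I expect no serious obstacle; the work is bookkeeping of which implication to invoke.

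For the forward implication, assume $\rho(I(Z))=1$. From the standard chain of inequalities \eqref{ELSHHbounds} we have $1\leq\widehat{\rho}(I(Z))\leq\rho(I(Z))=1$, forcing $\widehat{\rho}(I(Z))=1$. Since $Z$ is a fat point subscheme, $I(Z)^{(m)}=I(mZ)$ is integrally closed for every $m\geq1$ (recalled in the background discussion, and noted in the hypothesis of Theorem \ref{BSkoda}(c)), so Theorem \ref{BSkoda}(c) applies and gives $\rho_{int}(I(Z))\leq\rho(I(Z))=1$. Combining this with the universal lower bound $\rho_{int}(I(Z))\geq1$ from Lemma \ref{rhoIntLem} yields $\rho_{int}(I(Z))=1$.

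For the reverse implication, assume $\widehat{\rho}(I(Z))=\rho_{int}(I(Z))=1$. The hypothesis $\widehat{\rho}(I(Z))=1$ is condition (c) of Theorem \ref{MT3}, and the proof of Theorem \ref{MT3} records that (c) is equivalent to (d), i.e.\ $\overline{I(Z)^m}=I(mZ)=I(Z)^{(m)}$ for all $m\geq1$. This is exactly the hypothesis of Theorem \ref{BSkoda}(d), so $\rho_{int}(I(Z))=\rho(I(Z))$, and therefore $\rho(I(Z))=1$.

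Taken together the two directions give the equivalence, and the argument is really just a concatenation of Theorem \ref{MT3}((c)$\Leftrightarrow$(d)) with Theorem \ref{BSkoda}(c,d). The only subtle point to flag is that the forward direction needs integrally closed symbolic powers (to invoke Theorem \ref{BSkoda}(c)), which is automatic for fat point ideals, while the reverse direction needs the stronger identification $I^{(m)}=\overline{I^m}$ of symbolic powers with integral closures of ordinary powers, which is precisely what Theorem \ref{MT3}'s equivalence (c)$\Leftrightarrow$(d) supplies under the hypothesis $\widehat{\rho}(I(Z))=1$.
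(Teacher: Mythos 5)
Your proof is correct and takes essentially the same route as the paper: the paper's proof cites the two inequality chains together with Corollary \ref{CorIntRho1}, while you simply unpack Corollary \ref{CorIntRho1}'s argument inline (Theorem \ref{MT3}(c)$\Rightarrow$(d) followed by Theorem \ref{BSkoda}(d)) rather than invoking it by reference. No gaps; the substance is identical.
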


\begin{proof}
The proof is immediate from 
$$1\leq\rho_{int}(I(Z))\leq\rho(I(Z)),$$
$$1\leq\widehat{\rho}(I(Z))\leq\rho(I(Z))$$
and Corollary \ref{CorIntRho1}.
\end{proof}

\begin{remark}\label{rhointRem}
For a nontrivial homogeneous ideal $I\subset\KK[\PP^N]$, it is also of interest to define
$$\rho^{int}(I)=\sup\Big\{\frac{m+1}{r} : \overline{I^m}\not\subseteq I^r\Big\}.$$
This is exactly what \cite{DFMS} denotes as $K(I)$. 
Clearly we have $\rho_{int}(I)\leq \rho^{int}(I)$, and by applying Theorem \ref{BSkoda}
we see we have equality if and only if $\rho^{int}(I)=1$, in which case $I$ is normal. By
\cite[Proposition 4.19]{DFMS} we have $\rho(I)\leq \widehat{\rho}(I)\rho^{int}(I)$.
Thus we have 
$$\rho_{int}(I(Z))\leq \rho(I(Z))\leq \widehat{\rho}(I(Z))\rho^{int}(I(Z))$$
for every fat point subscheme $Z\subset\PP^N$.
\end{remark}

\section{Grifo's Conjecture}
We now discuss Grifo's containment conjecture \cite[Conjecture 2.1]{G}. 
In our context it says the following
(we note it is true and easy to prove for $N=1$).

\begin{conjecture}\label{GrifoConj}
Let $I\subseteq\KK[\PP^N]$ be a radical homogeneous ideal.
Then $I^{(h_Ir-h_I+1)}\subseteq I^r$ for all $r\gg0$.
\end{conjecture}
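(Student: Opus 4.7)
The plan is to reduce Grifo's conjecture to the strict inequality $\widehat{\rho}(I) < h_I$ for every nontrivial radical homogeneous $I$, using Theorems \ref{MT1} and \ref{MT2}, and then to attack that inequality via the resurgence bounds developed earlier in the paper.

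\textbf{Step 1 (Reduction).} Suppose $\widehat{\rho}(I) < h_I$, and set $\delta := h_I - \widehat{\rho}(I) > 0$. By Theorem \ref{MT1}, $\rho'(I) = \widehat{\rho}(I)$, so there exists $T$ such that every pair $(m, r)$ with $m, r \geq T$ and $I^{(m)} \not\subseteq I^r$ satisfies $m/r < h_I - \delta/2$. Substituting $m = h_I r - h_I + 1$, the ratio $h_I - (h_I - 1)/r$ exceeds $h_I - \delta/2$ once $r > 2(h_I - 1)/\delta$, and for $r$ sufficiently large both $m$ and $r$ exceed $T$. Hence for such $r$ the pair $(h_I r - h_I + 1, r)$ cannot witness a failure, proving $I^{(h_I r - h_I + 1)} \subseteq I^r$. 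By Theorem \ref{MT2}, the hypothesis $\widehat{\rho}(I) < h_I$ is equivalent to $\rho(I) < h_I$, so either version can serve as the sufficient condition.

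\textbf{Step 2 (Approaching the strict inequality).} For $I = I(Z)$ with $Z \subset \PP^N$ a reduced point scheme, the chain
$$\frac{\alpha(I)}{\widehat{\alpha}(I)} \leq \widehat{\rho}(I) \leq \rho(I) \leq \frac{\reg(I)}{\widehat{\alpha}(I)}$$
from \eqref{BHbounds} and \eqref{BHbounds2} reduces the task to a Waldschmidt-type bound $\reg(I) < h_I\,\widehat{\alpha}(I)$. A complementary attack uses the product inequality $\rho(I) \leq \widehat{\rho}(I)\,\rho^{int}(I)$ from Remark \ref{rhointRem}, combined with the Brian{\c c}on-Skoda control of $\rho_{int}$ supplied by Corollary \ref{CorIntRho3}, which is tightest in $\PP^2$ where $\rho_{int}(I(Z)) = 1$. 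For a general radical $I$, one would try to localize at each minimal prime of height $h_I$, where $I$ becomes a complete intersection, and piece the local estimates back together.

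\textbf{Main obstacle.} The principal obstacle is that the strict inequality $\rho(I) < h_I$ for radical homogeneous ideals is itself open in general; even for reduced point schemes in $\PP^2$, a proof that $\widehat{\rho}(I(Z)) < 2$ for every $Z$ is not known. Obtaining it appears to require a uniform asymptotic strengthening of \eqref{ELSHH}, namely a constant $c = c(I)$, independent of $r$, such that $I^{(h_I r - c)} \subseteq I^r$ for $r \gg 0$. Producing such a $c$ lies beyond the paper's current machinery and, I expect, is where the real difficulty lies; plausibly it would require inputs such as multiplier-ideal or local-cohomology methods tailored to radical ideals.
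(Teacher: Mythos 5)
The statement is a conjecture; the paper does not prove it, and neither do you — you correctly recognize this. Your Step 1 gives a clean, self-contained derivation of the implication ``$\widehat{\rho}(I) < h_I$ implies $I^{(h_Ir-h_I+1)}\subseteq I^r$ for $r\gg0$,'' via $\rho'(I)=\widehat{\rho}(I)$ from Theorem \ref{MT1}; this matches the paper's treatment, which cites \cite[Remark 2.7]{G} for the implication from $\rho'(I)<h_I$ (or $\rho(I)<h_I$) and then uses Theorems \ref{MT1} and \ref{MT2} to upgrade the hypothesis to $\widehat{\rho}(I)<h_I$. Your ``Main obstacle'' paragraph accurately describes the state of the art: the strict inequality $\widehat{\rho}(I)<h_I$ for all radical homogeneous $I$ is open, and neither the Waldschmidt bounds nor the integral-closure machinery in the paper close it.

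One substantive correction to Step 2: you conflate $\rho_{int}$ with $\rho^{int}$. The product inequality from Remark \ref{rhointRem} is $\rho(I)\leq \widehat{\rho}(I)\,\rho^{int}(I)$, where $\rho^{int}(I)=\sup\{(m+1)/r : \overline{I^m}\not\subseteq I^r\}$ is the quantity $K(I)$ of \cite{DFMS}, whereas Corollary \ref{CorIntRho3} controls the different quantity $\rho_{int}(I)=\sup\{m/r : \overline{I^m}\not\subseteq I^r\}$. By Remark \ref{rhointRem} these agree only when $\rho^{int}(I)=1$, i.e., when $I$ is normal — which fails for many $I(Z)$ (Example \ref{Ex2}). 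So the fact that $\rho_{int}(I(Z))=1$ in $\PP^2$ does not make the product inequality bite, and that route does not reduce the problem as you suggest. The Waldschmidt approach via $\reg(I)/\widehat{\alpha}(I)$ also cannot work for general radical $I$, since \eqref{BHbounds2} is stated only for fat point ideals; and the localization idea in Step 2 runs into the usual difficulty that symbolic powers are defined by intersecting over \emph{all} associated primes, not only the minimal ones of maximal height, so the local complete-intersection estimates do not obviously glue.
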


Remark 2.7 of \cite{G} shows the conjecture holds for $I$ if $\rho(I)<h_I$
(whether $I$ is radical or not), or if $\rho'(I)<h_I$,
and raises the question of whether it holds when $\widehat{\rho}(I)<h_I$.
This was answered affirmatively by \cite[Proposition 2.11]{GHM} with a direct proof
(we thank Grifo and Huneke for bringing this result to our attention).
Our results also answer this question affirmatively, in two ways.
By Theorem \ref{MT1}, $\widehat{\rho}(I)<h_I$ implies $\rho(I)'<h_I$,
hence Conjecture \ref{GrifoConj} holds for $I$ by the results of \cite{G}.
And by Theorem \ref{MT2}, $\widehat{\rho}(I)<h_I$ implies $\rho(I)<h_I$,
so again Conjecture \ref{GrifoConj} holds for $I$ by the results of \cite{G}.

\begin{remark}\label{GrifoCorRem}
When $I=I(Z)$ for a fat point scheme $Z\subset\PP^N$ we have $h_I=N$.
No examples of a fat point scheme $Z\subset\PP^N$ (radical or not) are known
for which it is not true that $I(Z)^{(Nr-N+1)}\subseteq I(Z)^r$ for all $r\gg0$.
By Remark 2.7 of \cite{G}, one approach to proving that $I(Z)^{(Nr-N+1)}\subseteq I(Z)^r$ for all $r\gg0$
holds for all $Z$ is to show $\rho(I(Z))<N$ whenever $N>1$. 
This raises the question of: for which $Z$ is it known that $\rho(I(Z))<N$?

Let $Z=m_1p_1+\cdots+m_sp_s$. If ${\rm gcd}(m_1,\ldots,m_s)>1$,
then $\rho(I(Z))<N$ by \cite[Proposition 2.1(2)]{TX}.
Thus it is the cases with ${\rm gcd}(m_1,\ldots,m_s)=1$ that remain of interest.

Another approach is to apply our Theorem \ref{MT2}: $\rho(I(Z))<N$ holds
if $\widehat{\rho}(I(Z))<N$.
Aiming to show $\widehat{\rho}(I(Z))<N$ has the advantage that the results of \cite{DFMS,DD} suggest
that $\widehat{\rho}(I(Z))$ is more accessible computationally than is $\rho(I(Z))$.
Another advantage is that in most cases where $\widehat{\rho}(I(Z))$ is known we have
$\widehat{\rho}(I(Z))=\frac{\alpha(I(Z))}{\widehat{\alpha}(I(Z))}$
(but see Example \ref{Ex3})
and in all known cases we have $\widehat{\alpha}(I(Z))\geq \frac{\alpha(I(mZ))+N-1}{m+N-1}$ for all $m\geq 1$.
Assuming both we have
$$\widehat{\rho}(I(Z))=\frac{\alpha(I(Z))}{\widehat{\alpha}(I(Z))}\leq \frac{\alpha(I(Z))}{\frac{\alpha(I(Z))+N-1}{N}}
= N\frac{\alpha(I(Z))}{\alpha(I(Z))+N-1}<N$$
and thus we would have $\rho(I(Z))<N$.
\end{remark}

The previous paragraph merits further discussion.
First we recall \cite[Conjecture 2.1]{HaHu}, which if true would refine the containment
$I(rNZ)\subseteq I(Z)^r$ of \cite{ELS, HoHu}:

\begin{conjecture}\label{HaHu}
Let $Z\subset\PP^N$ be a fat point scheme. Let $M=(x_0,\ldots,x_N)$. Then
$$I(rNZ)\subseteq M^{r(N-1)}I(Z)^r$$ 
holds for all $r>0$.
\end{conjecture}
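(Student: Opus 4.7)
The plan is to attack Conjecture \ref{HaHu} by combining the multiplier-ideal and tight closure machinery that underlies the Ein-Lazarsfeld-Smith and Hochster-Huneke containment $I(rNZ)\subseteq I(Z)^r$ with a refinement that extracts the extra factor $M^{r(N-1)}$ from the local structure at each point of $Z$, and by leveraging the integral closure bounds developed earlier in this paper.

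A first reduction is to the integral closure version: prove the stronger containment
$$\overline{I(Z)^{rN}}\subseteq M^{r(N-1)}\,I(Z)^r,$$
and then pass to symbolic powers. By Corollary \ref{CorIntRho3}(a) we have $\overline{I(Z)^{m+N-1}}\subseteq I(Z)^m$ for all $m\geq 1$, a Brian{\c c}on-Skoda chain; the aim is to refine this chain by inserting a factor of $M^{N-1}$ at each step, yielding the desired $M^{r(N-1)}$ after $r$ iterations. This would reduce the problem to a projectively homogenized Brian{\c c}on-Skoda statement in which the coefficients track degrees through the irrelevant ideal.

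In characteristic $p>0$, a parallel approach would adapt the Hochster-Huneke argument. The standard computation gives $I(Z)^{(Nq)}\subseteq I(Z)^{[q]}$ for $q=p^e$, from which the ELS/HH containment follows by taking $q$-th roots. To obtain the conjectured refinement one would try to prove $I(Z)^{(Nq)}\subseteq M^{q(N-1)}I(Z)^{[q]}$, pushing the Frobenius argument locally around each point $p_i$ of $Z$ and using that $I(p_i)$ is a prime of height $N$; a clean degree accounting (the minimal generators of $M^{q(N-1)}I(Z)^{[q]}$ lie in degree $q(N-1)+q\alpha(I(Z))$) would then extract the desired factor after taking $q$-th roots.

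The main obstacle is that Conjecture \ref{HaHu} implies Chudnovsky's conjecture $\widehat{\alpha}(I(Z))\geq (\alpha(I(Z))+N-1)/N$, which is open for $N\geq 3$. Any complete proof must therefore encode at least this degree inequality. A realistic first goal, in the spirit of this paper, is to establish Conjecture \ref{HaHu} under the additional hypothesis $\widehat{\rho}(I(Z))<N$: by Theorem \ref{MT2} and \cite[Proposition 2.6]{DD}, the set of ratios $s/r$ with $I(Z)^{(s)}\not\subseteq I(Z)^r$ is then finite, and one could hope to verify the strengthened containment directly for $r\gg 0$ through explicit degree comparisons.
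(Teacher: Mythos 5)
This statement is not a theorem of the paper: it is Conjecture \ref{HaHu}, recalled verbatim from \cite[Conjecture 2.1]{HaHu}, and the paper offers no proof of it. So there is no ``paper's own proof'' to compare against; the only question is whether your proposal actually establishes the conjecture, and it does not. You say so yourself at the end---since Conjecture \ref{HaHu} implies Chudnovsky's conjecture, which is open for $N\geq 3$, a complete proof along the lines you sketch is currently out of reach---so the proposal is an outline of possible attacks, not a proof.

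Beyond that honest disclaimer, your ``first reduction'' contains a genuine error in direction. You propose to prove $\overline{I(Z)^{rN}}\subseteq M^{r(N-1)}I(Z)^r$ and ``then pass to symbolic powers,'' but that step requires $I(rNZ)\subseteq\overline{I(Z)^{rN}}$, which is false in general. The containment that always holds is the reverse one: $I(rNZ)=I(Z)^{(rN)}$ is integrally closed and contains $I(Z)^{rN}$, so $\overline{I(Z)^{rN}}\subseteq I(rNZ)$. The condition $I(mZ)\subseteq\overline{I(Z)^m}$ for all $m$ is precisely condition (d) of Theorem \ref{MT3}, which by \cite[Corollary 4.16]{DFMS} is equivalent to $\widehat{\rho}(I(Z))=1$ (and, when $Z$ is reduced, to $I(Z)$ being a complete intersection). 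So your proposed reduction would only cover the case $\widehat{\rho}(I(Z))=1$, where $\rho(I(Z))=1$ and the conjecture is trivial anyway. The positive-characteristic sketch and the $\widehat{\rho}<N$ ``first goal'' are reasonable things to think about, but they are not carried out, and as stated give neither the conjecture nor a new partial result beyond what already follows from \cite{G} and Theorem \ref{MT2}.
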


A further refinement of $I(rNZ)\subseteq I(Z)^r$ is that
$I(r(m+N-1)Z)\subseteq I(mZ)^r$ \cite{ELS,HoHu}. This suggests
a refinement of Conjecture \ref{HaHu} (cf. \cite[Question 4.2.3]{HaHu}), namely
\begin{equation}\label{HaHuRefinement}
I(r(m+N-1)Z)\subseteq M^{r(N-1)}I(mZ)^r.
\end{equation}

If \eqref{HaHuRefinement} were true, then the following conjecture would also be true:

\begin{conjecture}\label{Chud}
Let $Z\subset\PP^N$ be a fat point scheme. Then
$$\widehat{\alpha}(I(Z))\geq \frac{\alpha(I(mZ))+N-1}{m+N-1}$$ 
for all $m\geq 1$.
\end{conjecture}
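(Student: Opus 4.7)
The plan is to establish Conjecture \ref{Chud} conditionally on the refined containment \eqref{HaHuRefinement}, which is exactly the implication the surrounding discussion invites. The argument is short and purely formal once \eqref{HaHuRefinement} is available; the entire difficulty has been pushed into that input.

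Assume \eqref{HaHuRefinement}: $I(r(m+N-1)Z) \subseteq M^{r(N-1)} I(mZ)^r$ for all $m,r \geq 1$. A homogeneous containment $A \subseteq B$ forces $\alpha(A) \geq \alpha(B)$, and for a product of homogeneous ideals $\alpha(JK) = \alpha(J) + \alpha(K)$. Applying both facts to \eqref{HaHuRefinement} yields
$$\alpha(I(r(m+N-1)Z)) \;\geq\; \alpha(M^{r(N-1)}) + \alpha(I(mZ)^r) \;=\; r(N-1) + r\,\alpha(I(mZ)).$$
Dividing by $r(m+N-1)$ gives
$$\frac{\alpha(I(r(m+N-1)Z))}{r(m+N-1)} \;\geq\; \frac{\alpha(I(mZ)) + N - 1}{m + N - 1}.$$

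Next I would invoke that the sequence $k \mapsto \alpha(I(kZ))$ is subadditive (since $I((k+k')Z) \supseteq I(kZ)\,I(k'Z)$), so by Fekete's lemma the ratios $\alpha(I(kZ))/k$ converge, in fact to their infimum, which is $\widehat{\alpha}(I(Z))$. Consequently the subsequential limit along $k = r(m+N-1)$, $r \to \infty$, equals $\widehat{\alpha}(I(Z))$, and taking $r \to \infty$ in the displayed inequality gives
$$\widehat{\alpha}(I(Z)) \;\geq\; \frac{\alpha(I(mZ)) + N - 1}{m + N - 1}$$
for each fixed $m \geq 1$, which is Conjecture \ref{Chud}.

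The main obstacle is, of course, that \eqref{HaHuRefinement} itself is conjectural: it sharpens the Ein--Lazarsfeld--Smith / Hochster--Huneke containment $I(r(m+N-1)Z) \subseteq I(mZ)^r$ by an additional factor of $M^{r(N-1)}$, which is a genuine strengthening rather than a formal consequence. An unconditional proof of Conjecture \ref{Chud} would need either to establish this refinement, presumably through the multiplier ideal arguments of \cite{ELS} or the tight closure arguments of \cite{HoHu} adapted to extract the extra power of $M$, or else to bypass \eqref{HaHuRefinement} entirely and produce forms of the requisite degree directly in $I(r(m+N-1)Z)$. The conditional implication above, by contrast, is a purely bookkeeping argument on initial degrees.
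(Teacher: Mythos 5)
Your conditional derivation is exactly the argument the paper has in mind: the text notes that \cite{HaHu} proves the $m=1$ case (that Conjecture \ref{HaHu} implies Conjecture \ref{Chud}) and that ``the same argument'' gives \eqref{HaHuRefinement} $\Rightarrow$ Conjecture \ref{Chud}, and your degree-count $\alpha(M^{r(N-1)}I(mZ)^r)=r(N-1)+r\alpha(I(mZ))$ followed by the Fekete limit is precisely that argument spelled out. The statement remains a genuine conjecture since \eqref{HaHuRefinement} is itself unproven, which you correctly flag.
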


The proof that Conjecture \ref{HaHu} implies Conjecture \ref{Chud} when $m=1$ is 
given in \cite{HaHu}. The same argument shows that \eqref{HaHuRefinement}
implies Conjecture \ref{Chud}.

The first version of Conjecture \ref{Chud} was posed by Chudnovsky \cite{Ch} 
over the complex numbers for the case that $m=1$ and $Z$ is reduced.
He also sketched a proof of his conjecture for $N=2$ which works over any algebraically closed field
(see \cite{HaHu} for a proof).
Conjecture \ref{Chud} assuming $Z$ reduced was posed by Demailly \cite{Dem}.

The best result currently known is by Esnault and Viehweg \cite{EV}. It is over the complex numbers
and says that if $Z$ is reduced with $N>1$, then
$$\widehat{\alpha}(I(Z))\geq \frac{\alpha(I(mZ))+1}{m+N-1}$$
holds for all $m\geq 1$.
Thus for reduced $Z$ over the complex numbers, taking $m=1$, we have
$$\frac{\alpha(I(Z))}{\widehat{\alpha}(I(Z))}\leq \frac{\alpha(I(Z))}{\frac{\alpha(I(Z))+1}{N}}
= N\frac{\alpha(I(Z))}{\alpha(I(Z))+1}<N.$$
Hence $\rho(I(Z))<N$ holds over the complex numbers 
whenever $Z$ is reduced and $\widehat{\rho}(I(Z)) = \frac{\alpha(I(Z))}{\widehat{\alpha}(I(Z))}$.

\section{Examples and questions}\label{ExSect}
For this section assume $Z$ is a fat point subscheme of $\PP^N$ 
(so $I(Z)$ is a nontrivial ideal of $\KK[\PP^N]$).

\begin{example}\label{Ex1}
If $I(Z)$ is a complete intersection or a power thereof, then 
$I(mZ)=I(Z)^m$ for all $m\geq1$, hence $\rho(I(Z))=1$ by Theorem \ref{MT3}.
Again by Theorem \ref{MT3}, if $Z$ is reduced, then 
$\rho(I(Z))=1$ if and only if $I(Z)$ is a complete intersection.
However, when $Z$ is not reduced, $\rho(I(Z))=1$ does not imply
$I(Z)$ is a complete intersection, or even a power of a complete intersection
ideal (homogeneous or not),
as is shown by Example \ref{verticesEx} and Remark \ref{FatPtsAndCI1}.
(Additionally, let $p_1,p_2,p_3\in\PP^N$ be noncollinear points and let
$Z=m_1p_1+m_2p_2+m_3p_3$ with $1\leq m_1\leq m_2\leq m_3$.
If either $m_1+m_2\leq m_3$ or if $m_1+m_2+m_3$ is even,
then by \cite[Theorem 2]{BZ} we have
$I(mZ)=I(Z)^m$ for all $m\geq1$ and hence $\rho(I(Z))=1$.
See \cite[Example 5.1]{BH2} for additional examples in $\PP^2$ of $Z$
for which all powers of $I(Z)$ are symbolic.
By Remark \ref{FatPtsAndCI2}, in none of these cases
is there a homogeneous complete intersection ideal $J$ 
such that $I(Z)=J^r$ for some $r\geq1$.
See \cite[Proposition 3.5]{HaHu} for a criterion for $Z\subset\PP^2$ 
such that $I(mZ)=I(Z)^m$ for all $m\geq1$; this gives further examples
for which $I(Z)$ is not a power of a homogeneous complete intersection.)
\end{example}

It is an interesting problem to clarify which fat point subschemes $Z\subset\PP^N$ 
have $I(mZ)=I(Z)^m$ for all $m\geq1$. 
We do not know any $Z$ for which the analytic spread $\ell(I(Z))=N$ but for which
$I(mZ)=I(Z)^m$ fails for some $m\geq1$.
Nor do we know any $Z$ for which either $\rho(I(Z))=1$ or $\widehat{\rho}(I(Z))=1$ but for which
$I(mZ)=I(Z)^m$ fails for some $m\geq1$.
Thus we have the following question.

\begin{question}\label{Q-NonreducedZCI}
Is it true that all powers of $I(Z)$ 
are symbolic (i.e., $I(mZ)=I(Z)^m$ for all $m\geq1$)
if the analytic spread of $I(Z)$ is $N$, or if
$\rho(I(Z))=1$ or $\widehat{\rho}(I(Z))=1$? 
%Is it always true that $\rho_{int}(I(Z))=1$?
\end{question}

\begin{example}\label{Ex1b}
It is worth noting here that there is a monomial ideal $I$ with $\rho(I)=\widehat{\rho}(I)=1$ 
where $I^m\subsetneq I^{(m)}$ for every $m>1$ 
(\cite[Remark 3.5]{DD}; we thank DiPasquale and Grifo for bringing this to our attention).
Since the ideal $I$ given in \cite[Remark 3.5]{DD} is a radical monomial ideal and powers of monomial primes are primary,
we have $I^{(m)}=\cap_{P\in{\rm Ass(I)}}P^m$. But monomial primes are normal,
so $I^{(m)}$ is integrally closed. Thus $\overline{I^m}\subseteq I^{(m)}$, and since
$\widehat{\rho}(I)=1$ we have by \cite[Corollary 4.16]{DFMS} that 
$I^{(m)}\subseteq \overline{I^m}$, so $I^{(m)}=\overline{I^m}$ for all $m\geq 1$.
Thus $I^m$ is integrally closed only for $m=1$, but nonetheless $\rho_{int}(I)=\rho(I)=1$.
\end{example}

Answering Question \ref{Q-NonreducedZCI} is closely related to whether
having $\rho(I(Z))=1$ or $\widehat{\rho}(I(Z))=1$ gives a 
complete solution to the containment problem for $I(Z)$.
If $\rho(I(Z))=1$ or $\widehat{\rho}(I(Z))=1$,
then by Theorem \ref{MT3} when $I(Z)$ is radical, we do have a complete solution to the containment 
problem for $I(Z)$: $I(mZ)\not\subseteq I(Z)^r$ for $m<r$ and otherwise we have 
$I(mZ)\subseteq I(Z)^r$.
When $I(Z)$ is not radical, we do not know if having either $\rho(I(Z))=1$ or $\widehat{\rho}(I(Z))=1$
solves the containment problem for $I(Z)$. For example, having $\rho(I(Z))=1$ means
$\widehat{\rho}(I(Z))=1$ and it means $I(mZ)\not\subseteq I(Z)^r$ for $m<r$ and 
$I(mZ)\subseteq I(Z)^r$ for $m>r$, but we do not know for which $m\geq1$ that we have 
$I(mZ)\subseteq I(Z)^m$.

We also do not know any examples with $\widehat{\rho}(I(Z))=1$ but $\rho(I(Z))>1$.
This raises the following question.

\begin{question}\label{Q4b}
Does $\widehat{\rho}(I(Z))=1$ always imply $\rho(I(Z))=1$?
\end{question}

\begin{example}\label{Ex4}
The assumption $I=\overline{I}$ is needed in both Lemma \ref{rhoIntLem} and Theorem \ref{BSkoda}(b).
For example, take $t=1$ and any $N\geq1$.
Let $I=(x_0^{N+1},x_1^{N+1},\ldots,x_N^{N+1})\subset\KK[\PP^N]$.
Then $x_0^N\cdots x_N^N\in\overline{I^N}=(x_0,\ldots,x_N)^{N(N+1)}$ but 
$x_0^N\cdots x_N^N\not\in I$, so $\overline{I^{tN}}\not\subseteq I^t$ and $\rho_{int}(I)=N$.
\end{example}

Although $I(Z)$ is not always normal
(see Example \ref{Ex2}) and hence $I(Z)^m=\overline{I(Z)^m}$ can fail, 
we do not have an example with $\overline{I(Z)^m}\not\subseteq I(Z)^r$
when $m>r$, so we do not know of any $Z$ for which $\rho_{int}(I(Z))\neq1$. 
If $\rho_{int}(I(Z))=1$ were always true, then 
by Corollary \ref{CorIntRho2} we would have that $\rho(I(Z))=1$
if and only if $\widehat{\rho}(I(Z))=1$, thus answering Question \ref{Q4b}.
This raises the following question. 

\begin{question}\label{FinalQuest2}
Is it ever true that
$\rho_{int}(I(Z)) >1$?
\end{question}

The next example shows that $\rho_{int}(I(Z))=1$ does not force $\rho(I(Z))=1$ or $\widehat{\rho}(I(Z))=1$,
even if $Z$ is reduced. In contrast, 
we know that (b) (and hence (a)) of Theorem \ref{MT3} implies
$\rho_{int}(I(Z))=1$, but we do not know if any of the other criteria of 
Theorem \ref{MT3} imply $\rho_{int}(I(Z))=1$, unless $Z$ is reduced or $N=2$.

\begin{example}\label{Ex5}
Examples occur with $\rho_{int}(I(Z))=1$ but with $\widehat{\rho}(I(Z))>1$.
Let $Z\subset\PP^N$ be a {\it star configuration}, meaning we have 
$s>N$ general hyperplanes, and $Z$ is the reduced scheme
consisting of $\binom{s}{N}$ points, where each point is the intersection of $N$ of 
the $s$ hyperplanes; see \cite[Definition 3.8]{HaHu}.
Then the ideal $I(Z)$ has $\alpha(I(Z))={\rm reg}(I(Z))=s+N-1$ \cite[Lemma 2.4.2]{BH1}
and $\widehat{\alpha}(I(Z))=s/N$ \cite[Lemma 2.4.1]{BH1}.
As noted in \S\ref{bkgrnd}, $\alpha(I(Z))={\rm reg}(I(Z))$ 
implies that $I(Z)$ is normal and hence $\rho_{int}(I(Z))=1$,
but $\alpha(I(Z))={\rm reg}(I(Z))$ also implies that 
$\frac{\alpha(I(Z))}{\widehat{\alpha}(I(Z))}=\widehat{\rho}(I(Z))=\rho(I(Z))$,
and in the case of a star configuration we have 
$\frac{\alpha(I(Z))}{\widehat{\alpha}(I(Z))}=N(s-N+1)/s>1$ when $N>1$.
However we do not know of any $Z$ with
$\widehat{\rho}(I(Z))<\rho_{int}(I(Z))$.
\end{example}

\begin{question}\label{FinalQuest}
Is it ever true that
$\rho_{int}(I(Z)) > \widehat{\rho}(I(Z))$?
Do any of (c), (d) or (e) of Theorem \ref{MT3} imply $\rho_{int}(I(Z))=1$, when $Z$ is not reduced and $N>2$?
\end{question}

\begin{example}\label{Ex2}
When $\widehat{\rho}(I(Z)) >1$, it is known that $\widehat{\rho}(I(Z)) < \rho(I(Z))$ can occur.
For example, let $I = (x(y^n-z^n),y(z^n-x^n),z(x^n-y^n))\subset \CC[x,y,z]=\CC[\PP^2]$,
so $I=I(Z)$ where $Z$ is a certain set of $n^2+3$ points.
Then by \cite[Theorem 2.1]{DHNSST}, we have $\widehat{\rho}(I(Z)) =
\frac{\alpha(I(Z))}{\widehat{\alpha}(I(Z))} =(n+1)/n < 3/2=\rho(I(Z))$ for $n\geq3$.
Moreover, since $\widehat{\rho}(I(Z)) < \rho(I(Z))$, it follows that
$I(Z)$ cannot be normal. 
\end{example}

\begin{example}\label{Ex3}
Moreover, examples of $Z$ occur with $\frac{\alpha(I(Z))}{\widehat{\alpha}(I(Z))} < \widehat{\rho}(I(Z))$. 
The results of \cite{DFMS} suggest that this should occur, but up to now
no explicit examples have been given.
For one such explicit example (indeed, the first we are aware of), let 
$Z$ consist of 8 points in the plane, where 3 of the points (say $p_1,p_2,p_3$)
are general (we may as well assume they are the coordinate vertices)
and the other 5 (say $p_4,\ldots,p_8$) are
on a general line $L$ (defined by a linear form $F$) and are general on that line. 
Then one can show that $\alpha(I(Z))=3$, $\widehat{\alpha}(I(Z))=5/2$
and $I(25sZ)\subsetneq I(Z)^{19s+1}$ for all $s\geq1$, and hence that 
$\alpha(I(Z))/\widehat{\alpha}(I(Z)) =6/5 < 25/19\leq \widehat{\rho}(I(Z))$.
(We now sketch the justification of these claims. 
The key is that $(I(25sZ))_{65s}$ vanishes on $L$ with order $15s$,
but $(I(Z)^{19s+1})_{65s}$ vanishes on $L$ with order $15s+3$, hence 
$(I(25sZ))_{65s}\not\subseteq (I(Z)^{19s+1})_{65s}$ and so $I(25sZ)\not\subseteq I(Z)^{19s+1}$.
It is easy to check that $\alpha(I(Z))=3$.
Using Bezout's Theorem, one can show that 
$(I(2tZ))_{5t}=(xyzF^2)^t\KK\subset\KK[\PP^2]=\KK[x,y,z]$
and hence (since $\dim ((I(mZ))_i)>0$ implies $\dim ((I(mZ))_j)>1$ for all $j>i$)
that $\alpha(I(2tZ))=5t$.
(In more detail, note that $(xyzF^2)^t\in (I(2tZ))_{5t}$.
If $(I(2tZ))_{5t}$ contained anything more than scalar multiples of
$(xyzF^2)^t$, then $(I(8tZ))_{20t}$ would contain more than scalar multiples of
$(xyzF^2)^{4t}$, so it's enough to consider $(I(8tZ))_{20t}$.
But by Bezout, we have 
\begin{align*}
(I(8tZ))_{20t}&=F^{5t}((I(8t(p_1+p_2+p_3)+3t(p_4+\cdots+p_8)))_{15t})\\
&=(xyz)^tF^{5t}((I(3t(2(p_1+p_2+p_3)+(p_4+\cdots+p_8))))_{12t})\\
&=(xyz)^tF^{5t+1}((I((3t(2(p_1+p_2+p_3))+(3t-1)(p_4+\cdots+p_8))))_{12t-1})\\
&=(xyz)^{t+1}F^{5t+1}((I((3t-1)(2(p_1+p_2+p_3)+(p_4+\cdots+p_8))))_{12t-4})\\
&=\cdots=(xyz)^{4t}F^{8t}\KK.)\\
\end{align*}
\vskip-\baselineskip
\noindent It follows that $\widehat{\alpha}(I(Z))=5/2$.
We also have by Bezout that $(I(25sZ))_{65s}=F^{15s}(I(5sZ'))_{50s}$, where
$Z'$ is the fat point subscheme obtained by taking the three coordinate vertices
with multiplicity 5 and the five points on $L$ with multiplicity 2. Moreover, one can
check that $(I(Z'))_{10}$ has greatest common divisor 1 (i.e., it defines a linear system
which is fixed component free); to see this, write elements of $(I(Z'))_{10}$ in two different ways
using conics and lines, where the two ways have no factors in common.
Thus $(I(5sZ'))_{50s}$ also defines a linear system that is fixed component free,
so $(I(25sZ))_{65s}=F^{15s}(I(5sZ'))_{50s}$ has $15sL$ as the divisorial
part of its base locus. But $(I(Z))_t$ has $L$ in its base locus for $t=3,4$
and is fixed component free for $t\geq5$. Now $(I(Z))^{19s+1}$ is spanned by 
products of $a$ homogeneous elements of $I(Z)$ of degree 5 or more and $b$ elements
of degree 4 or less with $a+b=19s+1$. Such an element vanishes on $L$ to order at least $b$.
To minimize $b$ we want $a$ as large as possible, and hence we want
to take all $a$ elements to have degree 5 and as many of the $b$ elements as possible
to have degree 3, and thus we have the inequality $5a+3b\leq 65s$. Solving
$5a+3b\leq 65s$ given $a+b=19s+1$ gives $2a\leq 8s-3$ and hence,
taking $a$ as large as possible we have $a=4s-2$, so $b=15s+3$.
I.e., every element of $((I(Z))^{19s+1})_{65s}$ vanishes on 
$L$ to order at least $15s+3$.) We do not know if
$\widehat{\rho}(I(Z))=25/19$, nor do we know the value of $\rho(I(Z))$.
\end{example}

\begin{question}\label{rhohatValue}
For the example $Z$ in the previous paragraph, what are the exact values of 
$\widehat{\rho}(I(Z))$, $\rho(I(Z))$ and $\rho_{int}(I(Z))$?
\end{question}

As Remark \ref{GrifoCorRem} explains, if the next question
has a negative answer, then Conjecture \ref{GrifoConj} holds
for $I=I(Z)$ whenever $\widehat{\rho}(I(Z))=\alpha(I(Z))/\widehat{\alpha}(I(Z))$.

\begin{question} 
Is it ever false that
$$\widehat{\alpha}(I(Z))\geq \frac{\alpha(I(Z))+1}{N}?$$
\end{question}

By \cite{DFMS}, $\widehat{\rho}(I(Z))$ is equal to the maximum
of $v(I(Z))/\widehat{v}(I(Z))$ for valuations $v$ supported on $I(Z)$, and this maximum always
occurs for what is known as a Rees valuation. Thus we have
$$\frac{v(I(Z))}{\widehat{v}(I(Z))}\leq \widehat{\rho}(I(Z))\leq N,$$ 
hence
$$\frac{v(I(Z))}{N}\leq \widehat{v}(I(Z)).$$ 
This raises the question of whether Chudnovsky-like bounds occur for valuations:

\begin{question}\label{FinalQuest3}
Is it always true for $N>1$ that
$$\frac{v(I(Z))+1}{N}\leq \widehat{v}(I(Z))?$$ 
\end{question}

If the answer is affirmative, then for some $v$ we would have 
$$\widehat{\rho}(I(Z))=\frac{v(I(Z))}{\widehat{v}(I(Z))} \leq \frac{v(I(Z))}{\frac{v(I(Z))+1}{N}}<N$$
which would confirm Grifo's Conjecture for $I(Z)$.

\begin{example}\label{verticesEx}
Consider $Z=p_1+\cdots+p_N+2p_{N+1}\subset\PP^N$ where the points $p_i$ are 
the coordinate vertices. Here we show that $I(Z)^m=I(mZ)$ for all $m\geq1$.
For $N=1$, $I(Z)^m=I(mZ)$ holds for every $Z$ (since fat point ideals are principal), so assume $N>1$.
(We note that the case $N=2$ is covered by the results of  \cite{BZ}.)
Since $I(Z)$ is a monomial ideal, we merely have to check for each monomial 
$f=x_0^{a_0}\cdots x_N^{a_N}\in I(mZ)$ that $f\in I(Z)^m$; i.e., that $f$ can 
is divisible by a product of $m$ monomials, each in $I(Z)$.
But if we choose coordinates $x_i$ such that $x_j\neq0$ at $p_j$ for each $j$,
then $f\in I(mZ)$ is equivalent to $a_1+\cdots+a_N\geq 2m$ and $a_0+a_1+\cdots+a_N-a_i\geq m$
for each $0<i\leq N$. Without loss of generality we may assume
$a_1\leq a_2\leq \cdots\leq a_N$, so the inequalities $a_0+a_1+\cdots+a_N-a_i\geq m$ reduce to the single
inequality $a_0+a_1+\cdots+a_{N-1}\geq m$. Let us set $b=a_1+\cdots+a_{N-1}$.
Then $f\in I(mZ)$ if and only if $a_0+b\geq m$ and $b+a_N\geq 2m$,
so we want to show $a_0+b\geq m$ and $b+a_N\geq 2m$ implies $f\in I(Z)^m$.

First suppose $b\geq m$. We have two cases: $b+a_N=2m$ and $b+a_N>2m$.
First assume $b+a_N=2m$, so $0<b/(N-1)\leq a_{N-1}\leq a_N\leq b$. Let 
$e_0=(e_{01},\ldots,e_{0N})=(a_1,\cdots,a_N)$ so $e_0$ is the exponent vector of 
$g_0=x_1^{e_{01}}\cdots x_N^{e_{0N}}=x_1^{a_1}\cdots x_N^{a_N}$.
Let $b_0=b=e_{01}+\cdots+e_{0,N-1}$.
Note that $g_0\in I(mZ)$ and $g_0$ divides $f$, so it's enough to show $g_0\in I(Z)^m$. 

Starting with $e_0$ and $b_0$, we will recursively define a sequence $e_i$ of exponent vectors 
$e_i=(e_{i1},\ldots,e_{iN})$ and nonnegative integers $b_i$ for $0\leq i\leq \omega$ with 
$b_i>0$ for $0\leq i<\omega$ and $b_\omega=0$,
satisfying the following conditions:
\begin{enumerate}
\item[(i)] the entries of $e_i$ are nonnegative and nondecreasing;
\item[(ii)] $b_i+e_{i,N}$ is even, where $b_i=e_{i1}+\cdots+e_{i,N-1}$;
\item[(iii)] $b_i\geq e_{iN}$.
\end{enumerate}

For $e_0$ we have $b_0=b\geq m>0$.
Moreover, conditions (i)-(iii) hold for $e_0$: 
(i) holds by assumption; 
(ii) holds since $b_0+e_{0,N}=b+a_N=2m$ by assumption;
(iii) holds since we noted above that $a_N\leq b$, but $e_{0N}=a_N$ and $b_0=b$.

Given that conditions (i)-(iii) hold for some $e_i$ with $b_i>0$, we now
define $e_{i+1}=(e_{i+1,1},\ldots,e_{i+1,N})$ with $b_i>b_{i+1}$.
In brief, $j_i$ and $k_i$ are chosen to be as large as possible such that $j_i<k_i$
and so that the entries of $e_{i+1}=(e_{i+1,1},\ldots,e_{i+1,N})$ are nondecreasing,
where $e_{i+1,l}=e_{il}$ for all $l$ except that $e_{i+1,j_i}=e_{ij_i}-1$ and $e_{i+1,k_i}=e_{ik_i}-1$.
More precisely, $j_i$ is the least index $t$ such that $e_{it}=e_{i,N-1}$.
Then $k_i=N$ if $e_{iN}>e_{i,N-1}$ and $k_i=j_i+1$ if $e_{iN}=e_{i,N-1}$.
Since $b_{i+1}$ is either $b_i-1$ or $b_i-2$ we have $b_i>b_{i+1}$. 

It is easy to check that the construction ensures that the
entries of $e_{i+1}$ are nonnegative and nondecreasing, so (i) holds for $e_{i+1}$.
Since $b_i+e_{i,N}$ is even and $b_{i+1}+e_{i+1,N}=b_i+e_{i,N}-2$, (ii) holds.
For (iii), if $e_{i+1,N}=e_{iN}$, then $k_i<N$ so $0<e_{i,N-2} = e_{i,N-1}=e_{i,N}$,
and we cannot have $e_{i,N-2}=1$ with $e_{i,N-3}=0$ since then the sum of the entries of
$e_i$ would be odd. Thus either $e_{i,N-2}>1$ or $e_{i,N-3}>0$; either way
$b_i\geq 2+e_{i,N-1}=2+e_{iN}$, so $b_{i+1}=b_i-2\geq e_{iN}=e_{i+1,N}$.
If $e_{i+1,N}=e_{iN}-1$, then $b_{i+1}=b_i-1$, so $b_i\geq e_{iN}$ implies
$b_{i+1}\geq e_{i+1,N}$ (and hence $b_{i+1}\geq0$).

Let $g_i$ be the monomial whose
exponent vector is $e_i$, so $g_i=x_1^{e_{i1}}\cdots x_N^{e_{iN}}$.
We have $g_\omega=1$, since $0=b_\omega\geq e_{\omega N}$,
so $e_\omega=(0,\ldots,0)$.
If $i<\omega$, then $g_i=g_{i+1}x_{j_i}x_{k_i}$.
Thus $g_0=(x_{j_0}x_{k_0})\cdots (x_{j_{\omega-1}}x_{k_{\omega-1}})$,
but each factor $x_{j_i}x_{k_i}$ is in $I(Z)$, and since
$\deg g_0=b_0+e_{0N}=2m$, we see that there are $m$ factors, so
$g_0\in I(Z)^m$, as we wanted to show.

We now consider case 2, $b+a_N>2m$ (still under the assumption that $b\geq m$). 
Starting with $(a_1,\ldots,a_{N-1},a_N)$, 
replace $a_N$ by the smallest integer $a$ satisfying $a\geq a_{N-1}$ and 
$b+a\geq 2m$. We then have that $g=x_0^{a_0}\cdots x_{N-1}^{a_{N-1}}x_N^a$
divides $f$ and is still in $I(mZ)$. Thus it is enough to show that $g\in I(Z)^m$.
If $b+a=2m$, we are done by case 1, so assume $b+a>2m$. 
Then by construction we have $a=a_{N-1}$. Since $b\geq m$,
we must have $a\leq m$. It is now not hard to find new exponents
$0\leq a_1'\leq a_2'\leq \cdots\leq a_{N-1}'$ such that 
$a_i'\leq a_i$ and $b'+a=2m$, where $b'=a_1'+\cdots+a_{N-1}'$.
Let $g_0=x_1^{a_1'}\cdots x_{N-1}^{a_{N-1}'}x_N^a$; then $g$ divides $f$
and by case 1 we have $g\in I(Z)^m$.

We are left with considering the case that $b<m$. We have
$a_0+b\geq m$ and $b+a_N\geq 2m$. Clearly we can reduce
$a_0$ and $a_N$ so that $a_0+b = m$ and $b+a_N = 2m$
(since the associated monomial $g$ divides $f$ and still is in $I(mZ)$).
So we may assume $a_0+b = m$ and $b+a_N = 2m$, in which case
$f=(x_1^{a_1}\cdots x_{N-1}^{a_{N-1}}x_N^b)x_0^{a_0}x_N^{a_N-b}
=(x_1^{a_1}\cdots x_{N-1}^{a_{N-1}}x_N^b)x_0^{m-b}x_N^{2(m-b)}
=(x_1^{a_1}\cdots x_{N-1}^{a_{N-1}}x_N^b)(x_0x_N^2)^{m-b}$.
But $x_0X_N^2\in I(Z)$ and $x_1^{a_1}\cdots x_{N-1}^{a_{N-1}}x_N^b$
is a product of $b$ factors of the form $x_ix_N$ for $1\leq i\leq N-1$,
and each of these is in $I(Z)$. Thus $f$ is a product of $m=b+(m-b)$
elements of $I(Z)$, hence $f\in I(Z)^m$.
\end{example}

\begin{remark}\label{FatPtsAndCI1}
Again consider $Z=p_1+\cdots+p_N+2p_{N+1}\subset\PP^N$ where the points $p_i$ are 
the coordinate vertices and $N>1$. Then there is no $N$-generated ideal 
$J$, homogeneous or not, such that $I(Z)=J^s$ for some $s\geq1$. 
Suppose there were; say $J=(F_1,\dots,F_N)$. 
The ideal $I(Z)$ defines the $N+1$ coordinate lines
in affine $N+1$ space, all taken with multiplicity 1 
except one taken with multiplicity 2.
Since each $F_i$ vanishes on each coordinate line, none 
of the $F_i$ can have any terms which are a power of
a single variable (i.e., every term involves a product of two or more variables).
Therefore, none of the $F_i$ have terms of degree less than 2. 
Since $(F_1,\dots,F_N)^s=J^s=I(Z)$ and since $I(Z)$ has elements with terms of degree 2,
we see that $s=1$ (otherwise the least degree of a term of an element of $J^s$
would be at least $2s\geq4$). But $s=1$ implies that $J=I(Z)$ is homogeneous,
and an easy argument shows that the least number of homogeneous generators
of a homogeneous ideal is the least number of generators possible,
which in this case is $\binom{N}{2}+N>N$ (since, for a particular ordering of the points $p_i$
we have $I(Z)=(x_ix_j:i>0, j>0)+(x_0x_i^2:i>0)$).
\end{remark}

\begin{remark}\label{FatPtsAndCI2}
Let $Z=m_1p_1+\cdots+m_rp_r\subset\PP^N$ with $N>1$, $m_i>0$ for all $i$ and
$\KK[\PP^N]=\KK[x_0,\ldots,x_N]$.
Here we show that there is an ideal $J$ generated by $N$ forms such that
$I(Z)=J^m$ for some $m\geq1$
if and only if $m_1=\cdots=m_r=m$ where $I(p_1+\cdots+p_r)$
is generated by $N$ forms. The reverse implication is known \cite[Lemma 5, Appendix 6]{ZS},
so assume $I(Z)=J^m$ for some $m\geq1$ with forms $F_i$ such that
$J=(F_1,\ldots,F_N)$.
Choose coordinates $x_0,\ldots,x_N$ such that none of the points lies on $x_0=0$
and $p_1=(1,0,\ldots,0)$.
Let $U_0$ be the affine neighborhood defined by $x_0\neq0$,
and let $f_i(x_1,\ldots,x_N)=F_i(1,x_1,\ldots,x_N)$ be the polynomial obtained by
setting $x_0$ to 1 in $F_i$. Then on $U_0$, $Z$ is defined by
the ideal $I'(Z)$ obtained from $I(Z)$ by setting $x_0=1$ in each
element of $I(Z)$, so $I'(Z)=(J')^m$ where $J'=(f_1,\ldots,f_N)$. 
Localizing at $P_1=I(p_1)=(x_1,\ldots,x_N)$ gives
$(P_1)_{P_1}^{m_1}=(I'(Z))_{P_1}=(J')^m_{P_1}$
and modding out by $P_1^{m_1+1}$ gives graded isomorphisms
$$\frac{P_1^{m_1}}{P_1^{m_1+1}}\cong
\frac{(P_1)_{P_1}^{m_1}}{(P_1)_{P_1}^{m_1+1}}=
\frac{(J')^m_{P_1}}{(P_1)_{P_1}^{m_1+1}}\cong
\frac{(J')^m+P_1^{m_1+1}}{(P_1)^{m_1+1}}.$$
Thus each $f_i$ can have no terms of degree less than
$d=m_1/m$ and there must be terms of degree exactly $d$, so $d$ is an integer,
hence $m_1=dm$ and $m\leq m_1$.
But the vector space dimension of $\frac{P_1^{m_1}}{P_1^{m_1+1}}$
is $\binom{m_1+N-1}{N-1}$, and the vector space dimension of
$\frac{(J')^m+P_1^{m_1+1}}{(P_1)^{m_1+1}}$ is at most
$\binom{m+N-1}{N-1}$, so we have $\binom{m_1+N-1}{N-1}\leq \binom{m+N-1}{N-1}$,
which implies $m_1\leq m$, hence $m=m_1$ and $d=1$.
With a change of coordinates, the 
same argument works for each point $p_i$, so $m_i=m$ for all $i$,
and, at each point $p_i$, the linear terms of the $f_j$ must
be linearly independent (otherwise we would have
$\dim_\KK\frac{(J')^m+P_1^{m_1+1}}{(P_1)^{m_1+1}}<\binom{m+N-1}{N-1}$).
Thus $J'=I(p_1+\cdots+p_r)$ on $U_0$, hence 
$J=I(p_1+\cdots+p_r)\subseteq\KK[\PP^N]$.
\end{remark}

\begin{question}
In Remark \ref{FatPtsAndCI2}, do we need to assume a priori that $J$ is homogeneous?
\end{question}

\section{Computational estimates of resurgences}

It might be possible to address some of the foregoing questions computationally.
More generally, it is of interest to consider to what extent quantities like resurgences can be computed.
 
\subsection{Denkert's thesis}
In an unpublished part of her thesis \cite{Den}, Denkert 
gives an algorithm for computing $\rho(I(Z))$ arbitrarily accurately
when the symbolic Rees algebra 
$$R_s(I(Z))=\oplus_t I(tZ)x^t\subseteq \KK[\PP^N][x]$$
is Noetherian (equivalently, when for some $a\geq 1$, all
powers of $I(aZ)$ are symbolic \cite[Theorem 2.1]{HHT}).

Let $I=I(Z)\subseteq \KK[\PP^N]$ be nontrivial and assume $I^{(at)} = (I^{(a)})^t$
for all $t > 0$. For each $s\geq1$, let $b_s$ be the largest $b$ such that
$I^{(asN)}\subseteq I^{b}$ and let $\epsilon>0$. Since $b_s\geq as$, 
we have $\frac{asN}{b_s}-\frac{asN}{b_s+1}=\frac{asN}{b_s(b_s+1)}\leq\frac{N}{as+1}$.
So, by picking $s\gg0$ and $\epsilon$ small, we can make
$\frac{N}{as+1}+\epsilon$ arbitrarily small. Denkert's algorithm either computes
$\rho(I)$ exactly or gives an estimate which is accurate to less than 
$\frac{N}{as+1}+\epsilon$.

Assume we have picked $s$ and $\epsilon$. Let $A=asN$ and let $B=b_s$. 
Thus we have 
$$I^{(At)}=(I^{(A)})^t\subseteq (I^{B})^t=I^{Bt}$$ 
for all $t\geq1$, and
for $m\geq At$ and $r\leq Bt$ we have 
$$I^{(m)}\subseteq I^{(At)}\subseteq I^{Bt}\subseteq I^r.$$

If $r\geq B\lceil\frac{A}{B\epsilon}\rceil$ and $\frac{m}{r}\geq\frac{A}{B}+\epsilon$, we now show for
the least $t$ such that $r<Bt$ that $m\geq At$, and hence $I^{(m)}\subseteq I^r$.
Indeed, we have $B(t-1)\leq r<Bt$, so $t-1=\lfloor\frac{r}{B}\rfloor\geq\lceil\frac{A}{B\epsilon}\rceil$.
Thus $(t-1)B\epsilon\geq A$, which is equivalent to
$(\frac{A}{B}+\epsilon)(t-1)B\geq At$, hence we have
$m\geq r(\frac{A}{B}+\epsilon)\geq B(t-1)(\frac{A}{B}+\epsilon)\geq At$.

In summary, we have $I^{(m)}\subseteq I^r$ for all 
$r\geq B\lceil\frac{A}{B\epsilon}\rceil$ and $\frac{m}{r}\geq\frac{A}{B}+\epsilon$,
and we have $I^{(A)}\not\subseteq I^{B+1}$.
Thus $\frac{A}{B+1}\leq \rho(I)$ and either $\rho(I)< \frac{A}{B}+\epsilon$
or $\rho(I)=m/r$ for some $r< B\lceil\frac{A}{B\epsilon}\rceil$
with $N>m/r\geq\frac{A}{B}+\epsilon$ (and there are only finitely many
such $r$ and $m$).

Moreover, since we have $I^{(m)}\subseteq I^r$ for all but finitely many $m$ and $r$
with $m/r\geq\frac{A}{B}+\epsilon$, we have $\widehat{\rho}(I) \leq \frac{A}{B}+\epsilon$
for each $\epsilon>0$, and hence we have $\widehat{\rho}(I) \leq \frac{A}{B}$.
This also follows by Theorem 1.2(3) of \cite{GHVT}.

\subsection{DiPasquale-Drabkin method}
An alternate approach is based on \cite{DD}.
Again assume the symbolic Rees algebra of a homogeneous ideal $I\subset\KK[\PP^N]$ is finitely 
generated. Then \cite{DD} shows that we can compute $\widehat{\rho}(I)$ exactly
(assuming we know the Rees valuations). Let $\epsilon>0$. Then, as in the proof of Theorem \ref{MT1},
whenever $I^{(m)}\not\subseteq I^r$, we have $\frac{s}{r+N} < \widehat{\rho}(I)$.
But there are only finitely many $r$ for which there is an $s$ such that both
$\frac{s}{r+N} < \widehat{\rho}(I)$ and
$\widehat{\rho}(I)+\epsilon \leq s/r$ hold.
For each of these $s$ and $r$ we check if $I^{(s)}$ is not contained in $I^r$.
For all such $r$ and $s$ which occur (if any), then $\rho(I)$ is the maximum of their ratios $s/r$. If none occur, then 
$\widehat{\rho}(I) \leq \rho(I) < \widehat{\rho}(I)+ \epsilon$. 

The estimates on $\rho(I)$ given by using \cite{DD} versus those of \cite{Den} are somewhat
similar, but each starts with different input data. Also, \cite{DD} gives 
us $\widehat{\rho}(I)$ exactly (if we know the Rees valuations), whereas 
\cite{Den} just gives us an upper bound on $\widehat{\rho}(I)$.

%%%%%%%%%%%%%%%%%%%%%%%%%%%%%%%%%%%%%%%%%%%%%%%%%%%%%%

\end{document}